\documentclass[12pt]{amsart}
\usepackage{amsmath,amscd,amsthm,amssymb,enumerate, url, eucal,  rotating}
\usepackage{fullpage}

\newcommand{\negpar}[1][-1em]{%
  \ifvmode\else\par\fi
  {\parindent=#1\leavevmode}\ignorespaces
}

\swapnumbers
\theoremstyle{plain}
\newtheorem{theorem}{Theorem}[section]
\newtheorem{lemma}[theorem]{Lemma}

\newtheorem{thm:1936a}[theorem]{Theorem\,\normalfont(Whitehead(1936a))}
\newtheorem{cor:nielsen}[theorem]{Corollary\,\normalfont(Nielsen(1919))}

\theoremstyle{definition}

\newtheorem{definitions}[theorem]{Definitions}

\newtheorem{notation}[theorem]{Notation}

\numberwithin{equation}{section}

\newcommand{\leftwreath}{\text{{\rotatebox{90}{$\scriptstyle\curvearrowright$}}}\mkern2mu}
\newcommand{\rightwreath}{\mkern2mu\text{{\reflectbox{\rotatebox{90}{$\scriptstyle\curvearrowright$}}}}}
\newcommand{\rank}{\operatorname{rank}}

\newcommand{\EE}{\operatorname{E}}
\newcommand{\VV}{\operatorname{V}}





\newcommand{\ZZ}{{\mathbb Z}}

\def \naturals{\mathbb{N}}
\def \integers {\mathbb{Z}}

\def\d1{\discretionary{-}{}{-}}
\def\e1{\discretionary{--}{}{--}}

\renewcommand{\phi}{\varphi}
\renewcommand{\le}{\leqslant}
\renewcommand{\ge}{\geqslant}




\exhyphenpenalty=10000

\tolerance = 500 \emergencystretch = 1pt

\raggedbottom

\begin{document}

\title{A graph-theoretic proof for\\
Whitehead's   second free\d1group algorithm}

\author{Warren Dicks*}
\address{Departament de Matem{\`a}tiques,  \newline  Universitat Aut{\`o}noma de Barcelona,
\newline E-08193 Bellaterra (Barcelona),  \newline  SPAIN \newline\null}
\email{dicks@mat.uab.cat \newline  \normalfont{\emph {Home page}:\,\,\,\,}\url{http://mat.uab.cat/~dicks/}}
\thanks{*Research supported by
MINECO (Spain) through project number MTM2014-53644}
\date{}

\keywords{Free group, Whitehead algorithm, Gersten graph. }
\subjclass[2010]{Primary: 20E05; Secondary:20F10, 20E08.}

\begin{abstract}   J.\,H.\,C.\,Whitehead's second free-group algorithm
 determines whether or not two given elements  of
a  free group lie in the same orbit of the automorphism group of
the free group.  The algorithm involves certain connected graphs, and
Whitehead used  three-manifold models to prove their
 connectedness; later,
Rapaport  and Higgins\hskip2pt{\scriptsize\&}\hskip2ptLyndon
 gave    group-theoretic proofs.

 Combined work of Gersten, Stallings, and Hoare  showed that
the three-manifold  models  may be viewed as  graphs.  We   give the direct translation of
 Whitehead's topological argument into the language of graph theory.
\end{abstract}

\maketitle

\section{Minimal background}\label{sec:outline}

Whitehead(1936b) gave an algorithm which, with input two finite sequences \mbox{$S_1,\,S_2$} of
 elements (or  conjugacy classes of elements) of a finite-rank  free group~$F$,
outputs either an $F$-automorphism $\phi$
such that \mbox{$\phi(S_1) = S_2$} or an assurance that no such~$\phi$ exists.
More importantly, he  introduced certain connected graphs that have
  been of great interest to group theorists.  His
nine-page proof of  connectedness  used a three-manifold model for each $F$-automorphism.
Rapaport(1958)  gave a twenty-page group-theoretic  proof of connectedness, and
Higgins\hskip2pt{\scriptsize\&}\hskip2ptLyndon(1962,\,1974)
gave one of  five pages; these proofs led the way to an even deeper understanding of
$F$-automorphisms.

Gersten(1987)  constructed  a    graph model for each $F$-automorphism, and
 Stallings(1983) pointed out a connection between Gersten's model   and Whitehead's.
 Krsti\'c(1989) used Cayley trees to simplify Gersten's construction.
Hoare(1990) gave an explicit description of Whitehead's model
in terms of Gersten's.  Below, we  give the resulting translation of
 Whitehead's topological argument into the language of
 graph theory\mbox{.\hskip-3pt}\footnote{\hskip2ptFor his earlier free-group algorithm, Whitehead(1936a)
also  used  three-manifold models, to prove his celebrated cut\-ver\-tex lemma.
Hoare(1988) gave the second proof,
 using Gersten's graphs
 in place of    manifolds.
 Dicks(2014, 2017),
refining work of Stong(1997), proved a more general  result by   tricolouring a Cayley tree.
The elegant folding theorem of Heusener\hskip2pt{\scriptsize\&}\hskip2ptWeidmann(2014)
 leads to a yet more general result.}\hskip5ptThis argument  concerns changes of
 bases  (free-generating sets) rather than automorphisms, and ours may be the
 first treatment of Gersten's graphs  that does not
  mention   group morphisms.

All of the following will apply throughout.

\begin{notation}\label{not:movesbis} Set \mbox{$\naturals:= \{0,1,2,\ldots \}$}.

Let $F$ be a  finite-rank  free group.
By a \textit{straight word in $F$}, we mean an element of $F$;
by a \textit{cyclic word in $F$}, we mean the $F$-conjugacy class of an element  of $F$; and,
by a \textit{word in~$F$}, we mean a straight-or-cyclic word in $F$.
Let $R$ be   a   finite set  of words in~$F$. Let $X$~and~$Y$ be $F$-bases.
In Section~\ref{sec:freegroups}, we shall recall the value
\mbox{$h(X):= \sum_{r \in R} X\text{-length}(r)\in \naturals$}.
We write \mbox{$X^{\pm 1}:= X \cup X^{-1}$}.
 We say that
\mbox{$Y$} is a  \textit{Whitehead transform of~$X$}
if there exists
 some \mbox{$x \in   X^{\pm1}$} such that
\mbox{$Y  \subseteq  \{1,   x  \}{\cdot}X   {\cdot} \{1,x^{-1}\}.$}
We say that   \textit{$X$~is a local-minimum point for~$h$}  if
\mbox{$h(X) \le h(X')$}
for each Whitehead transform $X'$ of~$X$.
  \qed
\end{notation}

In Section~\ref{sec:gersten}, we shall use Gersten's graphs to define a
 value \mbox{$\operatorname{d}(X,Y) \in \naturals$}   that
 Whitehead used tacitly. What  the  topological portion of Whitehead's argument   shows is precisely
 \begin{align}
&\text{if $X$~and~$Y$ are local\d1min\-imum points for $h$,   then either
  \mbox{$X^{\pm 1}  =   Y^{\pm 1}$} or some}\label{eq:3.3}
\\[-1mm]&\text{Whitehead transform  $Y'$ of $Y$
 satisfies \mbox{$h(Y') = h(Y)$} and \mbox{$\operatorname{d}(X,Y') < \operatorname{d}(X,Y)$}.}\phantom{---}\nonumber
 \end{align}
 This will be  stated  as Theorem~\ref{th:main}  below,  and
 our sole objective is to give a  self\d1contained graph-theoretic  proof that copies Whitehead's.
All the other parts of his article are graph theoretic  or group theoretic,
and we shall not discuss them.
However,  Whitehead leaves the main consequence of~\eqref{eq:3.3} unsaid,  and  it is as follows.

Let us say that $Y$ is an \textit{$F\mkern-2mu$-neighbour} of~$X$ if either
 \mbox{$Y^{\pm 1}  =   X^{\pm 1}$} (whence  \mbox{$h(Y) = h(X)$}) or $Y$ is a
Whitehead transform of~$X$.  Let \mbox{$\Gamma(F)$}
denote the graph with vertices the $F$-bases  and with edges joining $F\mkern-2mu$-neighbours.
Let \mbox{$\Gamma(h)$}  denote
 the  subgraph of $\Gamma(F)$  with vertices the local-minimum points for $h$  and
with edges joining $F\mkern-2mu$-neighbours.  It is obvious, but important, that
$h$ is constant on  each connected subgraph of~\mbox{$\Gamma(h)$},
and that    a simple algorithm  outputs a strictly   $h$-decreasing
  \mbox{$\Gamma(F)$}-path   starting at any given \mbox{$\Gamma(F)$}-vertex and  stopping when
 \mbox{$\Gamma(h)$} is reached.
Now suppose that $X$ is a local-minimum point for~$h$  and   that
\mbox{$h(Y) \le h(Z)$} for each \mbox{$F$}-basis~$Z$.
By induction on \mbox{$\operatorname{d}(X,Y)$}, it follows from~\eqref{eq:3.3}  that
 there   exists some ($h$-con\-stant)
\mbox{$\Gamma(h)$}-path  from $Y$~to~\mbox{$X$}.
On varying $X$, we find that
 \mbox{$\Gamma(h)$} is
 connected, which may be considered  to be the main result of Whitehead(1936b);
it greatly generalizes  the result of Nielsen(1919)
that  \mbox{$\Gamma(F)$} itself is
 connected.

The connectedness of \mbox{$\Gamma(F)$}
  was used in the arguments of Whitehead, Rapaport,  Higgins\hskip2pt{\scriptsize\&}\hskip2ptLyndon,
and Gersten.  However,  Krsti\'c did not use it, and this will permit  us to  prove~\eqref{eq:3.3}
without using it.

\section{Review of Cayley  trees}\label{sec:freegroups}\label{sec:graphs}

\begin{definitions}
By a  \textit{graph}, we mean a quintuple $(\, \Gamma, \VV\mkern-2mu\Gamma, \EE\mkern-2mu\Gamma, \iota, \tau)$
such that $\Gamma$ is a set,  \mbox{$\VV\mkern-2mu\Gamma$}~and  \mbox{$\EE\mkern-2mu\Gamma$}~are disjoint
subsets of~$\Gamma$ whose union is \mbox{$\Gamma$}, and
$\iota$ and $\tau$ are  maps from
\mbox{$\EE\mkern-2mu\Gamma$} to~\mbox{$\VV\mkern-2mu\Gamma$}.
We use the same symbol $\Gamma$ to denote both the  graph and the set.
We call \mbox{$\VV\mkern-2mu\Gamma$} and  \mbox{$\EE\mkern-2mu\Gamma$}
 the \textit{vertex-set} and \textit{edge-set} of $\Gamma$
respectively, and  call their elements \textit{$\Gamma$-vertices} and  \textit{$\Gamma$-edges}
respectively.  The maps $\iota$ and $\tau$ are called the \textit{initial}  and
\textit{terminal}  incidence functions  respectively.

Each \mbox{$ e \in  \EE\mkern-2mu  \Gamma $} has an inverse
in the free group  \mbox{$\langle\,\EE\mkern-2mu\Gamma\mid\emptyset\rangle$}, and we set
\mbox{$\iota(e^{-1}):= \tau(e)$} and  \mbox{$\tau(e^{-1}):= \iota(e)$}.
For each \mbox{$v \in \VV\mkern-2mu\Gamma$}, by
 the \textit{$\Gamma$-valence} of $v$,  we mean
 \mbox{$\bigl\vert \{e \in (\EE\mkern-2mu  \Gamma)^{\pm 1} : \iota e= v\}\bigr\vert$}.

By a \textit{\mbox{$\Gamma$}-path},
we mean a  sequence  of the form
 \mbox{$p =  (v_0, e_1, v_1, e_2, v_2, \ldots, v_{\ell-1}, e_\ell, v_\ell)$},
 where
 \mbox{$\ell \in \naturals$} and, for each \mbox{$i \in \{1,2,\ldots, \ell\}$},
\mbox{$e_i \in  (\EE\mkern-2mu  \Gamma)^{\pm 1}$}, \mbox{$v_{i-1} = \iota e_i$},  and \mbox{$v_i = \tau e_i$}.
We sometimes abbreviate $p$ to \mbox{$(e_1,e_2,\ldots, e_\ell)$}, even if \mbox{$\ell = 0$}
when $v_0$ is specified.
The path $p$ is said to be \textit{from $v_0$ to $v_\ell$}, and to have \textit{length}~$\ell$.
 For each \mbox{$ e \in  \EE\mkern-2mu  \Gamma $},  by the \textit{number of times  $p$
traverses~$e$}, we mean \mbox{$\bigl\vert \bigl\{i \in
\{1,2,\ldots, \ell\} : e_i \in \{e\}^{\pm1} \bigr\}\bigr\vert$}.
 We call  the element \mbox{$  e_1 e_2 \cdots e_\ell$} of
 \mbox{$\langle\, \EE \mkern-2mu  \Gamma \mid \emptyset\rangle$}
the \textit{$\Gamma$-label of~$p$}.
If \mbox{$v_\ell = v_0$}, then we say that $p$ is a \textit{closed} path \textit{based at $v_0$}.
If \mbox{$e_{i}\ne e_{i-1}^{-1}$}  for each \mbox{$i \in \{ 2, 3,  \ldots, \ell\}$},
then we say  that \mbox{$p$} is  a \textit{reduced} path.

For   \mbox{$v,w \in \VV\mkern-2mu\Gamma$}, let \mbox{$ \Gamma[v,w]$}
  denote the set of all $\Gamma$-paths from $v$ to $w$;  we then have the \textit{inversion}  map
 \mbox{$ \Gamma[v,w] \to  \Gamma[w,v]$},  \mbox{$p \mapsto p^{-1},$}
where \mbox{$(e_1,e_2, \ldots, e_\ell)^{-1}:= (e_\ell^{-1}, \ldots, e_2^{-1}, e_1^{-1})$}.
For   \mbox{$u,v,w \in \VV\mkern-2mu\Gamma$}, we have the
 \textit{concatenation}  map
 \mbox{$ \Gamma[u,v] \times  \Gamma[v,w]
\to  \Gamma[u,w]$},  \mbox{$(p_1,p_2) \mapsto  p_1\#p_2,$}
where \mbox{$(e_1,e_2, \ldots, e_\ell)\#(e_1',e_2',\ldots, e_m'):=
 (e_1,e_2, \ldots, e_\ell, e_1',e_2',\ldots, e_m').$}
If  a $\Gamma$-path $p$ is closed and \mbox{$p\#p$}  is reduced, we   say  that $p$ is \textit{cyclically reduced}.

We say that $ \Gamma  $ is a
 \textit{tree} if \mbox{$ \VV \mkern-2mu  \Gamma \ne \emptyset$} and, for all
 \mbox{$v,w \in  \VV \mkern-2mu  \Gamma$}, there exists a
unique reduced  $\Gamma$-path  from $v$ to~$w$. We say that $ \Gamma  $ is
\textit{connected} if,
for all \mbox{$v,w \in  \VV \mkern-2mu  \Gamma$}, there exists a
 $\Gamma$-path  from $v$ to~$w$.
By a   \textit{component} of  $\Gamma$,  we mean a maximal nonempty  connected subgraph of~$\Gamma$.  Thus,
 $ \Gamma  $ equals the
disjoint union of its components.
We say that $\Gamma$~is a \textit{forest} if each
component of $\Gamma$ is a tree.  Thus, $\Gamma$ is not a forest if and only if
some closed  $\Gamma$-path traverses some $\Gamma$-edge exactly once.

For any group $G$, we say that $\Gamma$ is   a \textit{left $G$-graph} if
 $\VV\mkern-2mu\Gamma$ and $\EE\mkern-2mu\Gamma$ are left $G$-sets, and $\iota$ and~$\tau$ are left-$G$-set morphisms;
\textit{right $G$-graphs} are defined similarly. \qed
\end{definitions}

Recall that $F$ is a finite-rank  free group, and that $X$ and $Y$ are $F$-bases.  The finite-rank hypothesis
will not be used in this section.

\begin{definitions}
For any \mbox{$g \in F$}, we let \mbox{${\cdot}g$} and \mbox{$g{\cdot}$} denote the permutations
\mbox{$F \to F$} given by \mbox{$v \mapsto vg$} and  \mbox{$v \mapsto gv$}  respectively.
For any subset $S$ of $F$, we write  \mbox{${\cdot}S:= \{{\cdot}g : g \in S\}$} and
\mbox{$S{\cdot}:= \{g{\cdot} : g \in S\} $}.

We let \mbox{$F \rightwreath Y$} denote the (Cayley)
graph with vertex-set \mbox{$F$} and edge-set \mbox{$F \times {\cdot}Y$},
for which each edge \mbox{$(v,{\cdot}y)$} has initial vertex $v$ and\vspace{-1.2mm} terminal vertex $vy$.
The    \mbox{$(F \rightwreath Y)$}-paths \mbox{$(v, (v,{\cdot}y), vy)$}
and \mbox{$(vy, (v,{\cdot}y)^{-1}, v)$} are depicted as
\mbox{$v \xrightarrow{{\cdot}y}{\mkern-9mu-} vy$}  and
\mbox{$vy \xrightarrow{{\cdot}y^{-1}}{\mkern-9mu-} v$} respectively.
An  \mbox{$(F \rightwreath Y)$}-path $p$ will sometimes be depicted in the form\vspace{-1mm}
$$v   \xrightarrow{{\cdot}y_1}{\mkern-9mu-}  vy_1
\xrightarrow{{\cdot}y_2}{\mkern-9mu-} vy_1y_2
\to{\mkern-9mu-} \, \cdots   \to{\mkern-9mu-}
vy_1y_2\cdots y_{\ell-1}\xrightarrow{{\cdot}y_\ell}{\mkern-9mu-}  vy_1y_2 \cdots y_{\ell-1}y_\ell, $$
for a unique \textit{\mbox{$Y^{\pm 1}$}-sequence}   \mbox{$\sigma = (y_1, y_2, \ldots, y_\ell)$},
that is,  an  $\ell$-tuple  of elements of \mbox{$Y^{\pm 1}$}   for some  \mbox{$\ell \in \naturals$}.
We call $\sigma$ the \textit{right \mbox{$Y^{\pm1}$}-label} of $p$.
We say  that $\sigma$  is   \textit{reduced} if  \mbox{$y_i  \ne y_{i-1}^{-1}$}
for each \mbox{$i \in \{2,3,\ldots,\ell\}$},
and  that $\sigma$ is   \textit{cyclically reduced} if
\mbox{$(y_1,y_2,\ldots,y_\ell, y_1,y_2,\ldots,y_\ell)$} is reduced.
Thus, $p$ is a reduced \mbox{$(F \rightwreath Y)$}-path if and only its right
\mbox{$Y^{\pm1}$}-label is a reduced
\mbox{$Y^{\pm 1}$}-sequence.

We let \mbox{$X \leftwreath F$} denote the
graph with vertex-set \mbox{$F$} and edge-set \mbox{$X{\cdot}\times F$},
for which each edge \mbox{$(x{\cdot}, v)$} has initial vertex $v$ and\vspace{-1.2mm} terminal vertex $xv$.
The   \mbox{$(X \leftwreath F)$}-paths \mbox{$(v,(x{\cdot}, v), xv)$}
and   \mbox{$(xv,(x{\cdot},v)^{-1}, v)$} are depicted as
  \mbox{$v \xrightarrow{x{\cdot}}{\mkern-9mu-} xv$}  and
   \mbox{$xv \xrightarrow{x^{-1}{\cdot}}{\mkern-9mu-} v$} respectively.
An \mbox{$(X \leftwreath F)$}-path $p$ will sometimes be depicted in the form\vspace{-1mm}
$$ v    \xrightarrow{x_1{\cdot}}{\mkern-9mu-}  x_1v \xrightarrow{x_2{\cdot}}{\mkern-9mu-} x_2x_1v
\to{\mkern-9mu-} \,\cdots\to{\mkern-9mu-}
 x_{\ell-1} \cdots x_2x_1v \xrightarrow{x_\ell{\cdot}}{\mkern-9mu-}  x_\ell x_{\ell-1} \cdots x_2x_1v, $$
for a unique  \mbox{$X^{\pm 1}$}-sequence    \mbox{$\sigma = (x_\ell, \ldots, x_2, x_1)$}, called
 the \textit{left \mbox{$X^{\pm1}$}-label} of $p$.
Again, $p$ is a reduced \mbox{$(X \leftwreath F)$}-path if and only its left \mbox{$X^{\pm1}$}-label is a reduced
\mbox{$X^{\pm 1}$}-sequence.

We let \mbox{$X \leftwreath F \rightwreath Y$}
denote the graph with vertex-set $F$ and edge-set the (disjoint) union of
 \mbox{$X{\cdot}\times F$} and \mbox{$F \times {\cdot}Y$}, with
initial and terminal vertices as before.  Thus,
\mbox{$X \leftwreath F$} and
\mbox{$F \rightwreath Y$} are subgraphs of
\mbox{$X \leftwreath F \rightwreath Y$} which are   being amalgamated over
their common vertex-set $F$. \qed
\end{definitions}

 Dehn(1910) initiated the study of Cayley graphs of infinite groups, particularly
surface groups, and he  must  have known the following  at the start.

\begin{theorem}\label{th:tree}  The left $F\mkern-2mu$-graph \mbox{$F \rightwreath Y$} is a tree.
\end{theorem}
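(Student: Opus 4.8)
The plan is to recognize this theorem as the graph-theoretic restatement of the normal-form theorem for the free group $F$ on the basis $Y$, and to translate it through the path--sequence dictionary already installed in the preceding definitions. First I would record that $F \rightwreath Y$ is nonempty, since its vertex-set is $F$, which contains $1$. The substance is then the unique-reduced-path requirement: fixing $v,w \in \VV(F \rightwreath Y) = F$, I must produce exactly one reduced $(F \rightwreath Y)$-path from $v$ to $w$.

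The key observation is the bijection, implicit in the definition of the right $Y^{\pm 1}$-label, between $(F \rightwreath Y)$-paths starting at $v$ and $Y^{\pm 1}$-sequences $\sigma = (y_1, y_2, \ldots, y_\ell)$: the path with right label $\sigma$ runs from $v$ to $v y_1 y_2 \cdots y_\ell$, and it is a reduced path precisely when $\sigma$ is a reduced $Y^{\pm 1}$-sequence. Hence reduced paths from $v$ to $w$ correspond to reduced $Y^{\pm 1}$-sequences whose product equals $v^{-1}w$, and the theorem becomes the assertion that the element $g := v^{-1}w$ of $F$ is the product of one and only one reduced $Y^{\pm 1}$-sequence.

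For existence I would use that $Y$ generates $F$: write $g$ as some product of elements of $Y^{\pm 1}$, then repeatedly delete adjacent inverse pairs until no $y_i$ equals $y_{i-1}^{-1}$. This terminates because each deletion strictly lowers the length, and it leaves the product unchanged, so it yields a reduced $Y^{\pm 1}$-sequence with product $g$, hence a reduced path from $v$ to $w$. For uniqueness I would invoke that $Y$ is an $F$-basis: by the normal-form theorem for the free group $F$ on $Y$, distinct reduced $Y^{\pm 1}$-sequences have distinct products, so the reduced sequence representing $g$ is unique, and therefore so is the reduced path.

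The main obstacle is the \emph{uniqueness} half, which is exactly the nontrivial content of the normal-form theorem; everything else is bookkeeping. Here I would simply cite this as the defining property of the free group $F$ on $Y$. Were a self-contained treatment wanted, I would instead supply the standard van der Waerden argument: let $F$ act on the set of reduced $Y^{\pm 1}$-sequences by left multiplication followed by reduction, verify that this is a well-defined group action, and read off uniqueness from the fact that applying $g$ to the empty sequence recovers the reduced form of $g$. Finally, I note that the left $F$-action on $F \rightwreath Y$ by left translation is simply transitive on vertices, so one may normalise to the case $v = 1$ at the outset, which trims the bookkeeping.
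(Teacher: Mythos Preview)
Your proposal is correct: the tree property of $F \rightwreath Y$ is precisely the normal-form theorem for $F$ on the basis $Y$, and your path--sequence dictionary makes this translation cleanly. The existence half (via iterated cancellation) and the uniqueness half (via normal form, or via van der Waerden's action on reduced words) are both standard and sound.

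The paper, however, takes a genuinely different route. Rather than invoking or reproving the normal form theorem combinatorially, it gives a homological argument in the style of Fox's free differential calculus. Connectedness is handled by an equivalence-relation argument on $\VV T = F$. For acyclicity, the paper observes that a closed path traversing some edge exactly once would yield a nonzero element of the kernel of the boundary map $\hat\tau - \hat\iota : \ZZ[\EE T] \to \ZZ[\VV T]$, and then shows this map is injective by exhibiting a one-sided inverse $\hat\alpha$. The map $\alpha$ is produced by the universal property of the basis $Y$: one defines a homomorphism from $F$ into the semidirect product $F \ltimes \ZZ[\EE T]$ sending each $y \in Y$ to $(y, 1{\otimes}y)$, and the second coordinate is the desired $\alpha$.

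What each approach buys: your argument is more elementary and more recognisable to a reader who already knows the normal-form theorem, and the van der Waerden trick keeps it self-contained. The paper's Fox-derivative argument uses only the universal property of $Y$ (existence of homomorphisms extending maps on $Y$), never touching reduced words directly, and so it \emph{derives} the normal-form theorem rather than assuming it; it also fits the paper's algebraic style and foreshadows the $\ZZ$-module viewpoint. If you were to submit your version in this paper's context, the van der Waerden alternative would be the appropriate one, since simply citing the normal-form theorem risks circularity depending on how ``$F$-basis'' is taken to be defined.
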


\begin{proof}[Proof\,\,\normalfont(Fox(1953),  streamlined by Dicks(1980))] Set \mbox{$T:= F \rightwreath Y$}.
For each \mbox{$ (v,y) \in   F \times Y$}, set
 \mbox{$v{\otimes} y := (v,{\cdot}y) \in  F \times  {\cdot} Y = \EE T$};
thus, \mbox{$\iota(v{\otimes} y) = v$} and \mbox{$\tau(v{\otimes} y) = v y$}.

Clearly,  $T$~is  nonempty.

Let $\sim$ denote
the inclusion-smallest equivalence relation   on \mbox{$\VV\mkern-2mu T$} such that
\mbox{$\iota (v{\otimes} y) \sim \tau (v{\otimes} y)$} for each $T$-edge \mbox{$v{\otimes} y$}.
There exists a   left-$F$-set isomorphism between  the set of components of $T$ and the
set of equivalence classes of~$\sim$.
Also, $\sim$\, is
the inclusion-smallest equivalence relation  on \mbox{$F$} such that \mbox{$v \sim v y$}
  for each \mbox{$ (v,y) \in   F \times Y$}.
In particular, the  equivalence class $[1]$ of $1$
satisfies \mbox{$[1]  = [y] = y{\cdot}[1]$}  for each $y \in Y$.  Hence,
the subgroup \mbox{$\{f \in F : f{\cdot}[1] =  [1]\}$}
  of $F$   includes~$Y$.
 Thus, for all \mbox{$f \in \langle Y \,\rangle = F$},
 \mbox{$[1] = f{\cdot}[1] =  [f].$}
Hence,   \mbox{$[1] = F$}.  Thus, $T$~is connected.

For each set $S$, we let \mbox{$\ZZ [S]$} denote   the  free   \mbox{$\ZZ\mkern2mu$}-module
on \mbox{$S$}.
The maps  \mbox{$\iota, \tau: \EE T \to     \VV\mkern-1mu T $}  induce
\mbox{$\ZZ\mkern2mu$}-module morphisms
\mbox{$\hat\iota, \hat\tau: \ZZ [\,\EE T\mkern1mu] \to      \ZZ [\,\VV\mkern-1mu T\mkern1mu]$}.
For each closed $T$-path $p$ which  traverses some $T$-edge exactly once,
the abelianization map \mbox{$\langle\, \EE  T \mid \emptyset \rangle \to  \ZZ [\,\EE T\mkern1mu]$}
carries  the $T$-label of $p$    to a nonzero element of
 the kernel of~\mbox{$\hat\tau{-}\hat\iota$}.  Thus, to show that $T$ is a tree, it suffices to show that
\mbox{$\hat \tau{ - }\hat \iota$} is injective.
Using the natural left $F$-action   on \mbox{$\ZZ [\,\EE T\mkern1mu]$},
we may form   the semi-direct-product group \mbox{$(\begin{smallmatrix}
 F&\ZZ [\,\EE T\mkern1mu]\\
\{0\} & \{1\}
\end{smallmatrix})$} with  matrix-style multiplication,  wherein  each element
\mbox{$(\begin{smallmatrix}
a&b \\
  0  &  1
\end{smallmatrix})$}  is \vspace{.5mm} denoted
\mbox{$\lceil a, b \rceil$}.
 Since  $Y$ is an  $F$-basis, there exists a unique group morphism
\mbox{$  F \to (\begin{smallmatrix}
 F&\ZZ [\,\EE T\mkern1mu]\\
\{0\} & \{1\}
\end{smallmatrix})$},
\mbox{$f \mapsto \lceil \phi f, \alpha f \rceil$},
such that
\mbox{$ \lceil \phi y ,\alpha y  \rceil  =   \lceil y,  1{\otimes} y\rceil$} for each \mbox{$y \in Y$}.
For all \mbox{$f, \, g  \in F$}, \vspace{-3mm}
$$  \lceil \phi(f g) ,\alpha(f g)\rceil
=   \lceil\phi f  , \alpha f  \rceil \lceil \phi g   , \alpha g  \rceil
= \lceil (\phi f)  (\phi g),  (\phi f) (\alpha g) + \alpha  f \rceil.\vspace{-1mm}$$
Then  \mbox{$\phi:F \to F$} is the identity map,  since \mbox{$\phi y   =  y$} and
\mbox{$\phi(f g)  =   (\phi f) (\phi g)$}.
The map \mbox{$\alpha:F \to \ZZ [\,\EE T\mkern1mu]$} satisfies \mbox{$\alpha y  =  1{\otimes} y$}
and  \mbox{$\alpha(f g) = (\phi f)(\alpha g) +    \alpha f$}.  Thus, we have
 a map  \mbox{$\alpha:  \VV\mkern-1mu T  \to \ZZ [\,\EE T\mkern1mu]$}
such that, for each \mbox{$v{\otimes} y \in  \EE T$},\vspace{-1mm}
   $$\alpha\bigl(  \tau(v{\otimes} y)\bigr){-} \alpha\bigl( \iota(v{\otimes} y)\bigr)
 =\alpha(v y){ - }\alpha(v) = (\phi v) (\alpha y)= (v)(1{\otimes} y)= v{\otimes} y.\vspace{-1mm}$$
Now $\alpha$  induces a
\mbox{$\ZZ\mkern2mu$}-module morphism \vspace{-1mm}
 \mbox{$\hat\alpha:  \ZZ[\,\VV\mkern-1mu T\mkern1mu]  \to \ZZ [\,\EE T\mkern1mu]$}, and
the composite\vspace{-1mm}
$$\ZZ [\,\EE T\mkern1mu] \xrightarrow{\hat \tau - \hat \iota}   \ZZ [\,\VV\mkern-1mu T\mkern1mu]
 \xrightarrow{\hat \alpha}  \ZZ [\,\EE T\mkern1mu]\vspace{-1mm}$$
is the identity map on \mbox{$\ZZ [\,\EE T\mkern1mu]$},
since it carries each \mbox{$v{\otimes} y \in  \EE T$} to itself.  Hence, \mbox{$\hat \tau{ - }\hat \iota$}
 is injective, as desired.
 \end{proof}

 \begin{definitions}\label{defs:height}
For each straight word $r$ in $F$, there exists some reduced \mbox{$Y^{\pm 1}$}-sequence
\mbox{$(y_1,y_2,\ldots,y_\ell)$} such that \mbox{$y_1 y_2\cdots y_\ell= r$}.  Here,\vspace{-1mm}
$$1 \xrightarrow{{\cdot}y_1}{\mkern-9mu-} y_1 \xrightarrow{{\cdot}y_2}{\mkern-9mu-} y_1y_2 \to{\mkern-9mu-}
\cdots \to{\mkern-9mu-} y_1y_2 \cdots y_{\ell-1} \xrightarrow{{\cdot}y_\ell}{\mkern-9mu-}   y_1 y_2\cdots y_\ell= r$$
is a reduced \mbox{$(F \rightwreath Y)$}-path from $1$ to $r$,
which is unique  by Theorem~\ref{th:tree}.
Thus, \mbox{$(y_1,y_2,\ldots,y_\ell)$} is unique, and
 we call it the reduced \mbox{$Y^{\pm 1}$}-sequence \textit{for}~$r$.
We set \mbox{$Y\mkern-2mu\text{-length}(r):= \ell$}  and
\mbox{$Y_{\vert y}\mkern1mu\text{-length}(r):=
 \bigl\vert\bigl\{i \in \{1,2,\ldots, \ell\} :  y_i \in \{y\}^{\pm 1}\bigr\}\bigr\vert$},
 for each    \mbox{$y \in Y^{\pm 1}$}.

For each cyclic word $r$ in $F$, there exists some cyclically reduced
\mbox{$Y^{\pm 1}$}-sequence \mbox{$(y_1, \ldots,y_\ell)$} such that
\mbox{$y_1 y_2\cdots y_\ell \in r$}.
 Here, \mbox{$(y_1, \ldots,y_\ell)$} is unique up to cyclic permutation,
as may be seen by considering the \mbox{${\cdot}Y$}-labelled
quotient graphs  \mbox{$\langle g \rangle \backslash (F\rightwreath Y)$}, \mbox{$g \in r$}, which are all
isomorphic.
We set \mbox{$Y\mkern-2mu\text{-length}(r):= \ell$} and
\mbox{$Y_{\vert y}\mkern1mu\text{-length}(r):=
 \bigl\vert\bigl\{i \in \{1,2,\ldots, \ell\} :  y_i \in \{y\}^{\pm 1}\bigr\}\bigr\vert$} for \mbox{$y \in Y^{\pm 1}$}.

Recall that $R$ is a finite set of words in $F$.
We set \mbox{$h(Y):=   \sum_{r \in R} Y\text{-length}(r)$} \vspace{.5mm} and
  \mbox{$h(Y_{\vert y}):=   \sum_{r \in R} Y_{\vert y}\mkern2mu\text{-length}(r)$}.
It is clear that
\mbox{$h(Y_{\vert y^{-1}}) =  h(Y_{\vert y})$}
and \mbox{$h(Y) =   \sum_{y \in Y} h(Y_{\vert y})$}. \qed
\end{definitions}

\section{Gersten's graphs and Whitehead's proof} \label{sec:gersten}

\begin{definitions}\label{defs:tangle}
Consider any   subset $V$ of $F$.
We let \mbox{$X \leftwreath V$},  \mbox{$V \rightwreath Y$}, and
 \mbox{$X \leftwreath V \rightwreath Y$}   denote the full subgraphs
 of \mbox{$X \leftwreath F$},  \mbox{$F \rightwreath Y$}, and
 \mbox{$X \leftwreath F \rightwreath Y$}with vertex-set $V$  respectively, where
 a subgraph $\Gamma_0$ of a graph
  $\Gamma$ is said to be \textit{full} if $\Gamma_0$~contains every
$\Gamma$-edge whose initial and terminal vertices lie in~$\Gamma_0$.
By Theorem~\ref{th:tree},  \mbox{$X \leftwreath F$}   and
 \mbox{$F \rightwreath Y$}  are trees; thus, \mbox{$X \leftwreath V$} and \,\mbox{$V\rightwreath Y$}
are forests. A subset of \mbox{$X \leftwreath F \rightwreath Y$} is said to be
  \textit{\mbox{$1$}-containing} it it contains~\mbox{$1$}.
We say that $V$ is an \textit{\mbox{$(X,Y)$}-translator} if
$V$  is a  \mbox{$1$}-containing  $F$-generating set such that
  \mbox{$X \leftwreath V$}  and  \mbox{$V\rightwreath Y$}
are trees.  In this event, we let \mbox{$(X \leftwreath V \rightwreath Y)_{\ge 3}$}
denote the set of elements of \mbox{$V{-}\{1\}$}
which have \mbox{$(X \leftwreath V \rightwreath Y)$}-valence at least~$3$.
Notice that \mbox{$\vert V{-}\{1\}\vert \ge \rank(F)$}, since
 \mbox{$V{-}\{1\}$} generates~$F$.

Clearly, $F$ itself is an \mbox{$(X,Y)$}-translator.  Let \mbox{$\kappa$}
denote the minimum value for \mbox{$\vert V{-}\{1\} \vert$}
as $V$ ranges over the set of all \mbox{$(X,Y)$}-trans\-la\-tors.
If  \mbox{$\kappa >  \rank(F)$}, we define   \mbox{$\operatorname{d}(X,Y):= \kappa$}.
Otherwise,   \mbox{$\kappa =  \rank(F)$}, and we then define  \mbox{$\operatorname{d}(X,Y)$}   to be the
minimum value  for \mbox{$\bigl\vert (X \leftwreath V \rightwreath Y)_{\ge 3} \bigr\vert$}
as $V$ ranges over the set of all \mbox{$(X,Y)$}-translators of cardinal \mbox{$1{+}\rank(F)$}. \qed
\end{definitions}

\begin{lemma}[Gersten(1987)]\label{lem:gerstenbis}  \mbox{$\operatorname{d}(X,Y) \in \naturals$}.
 \end{lemma}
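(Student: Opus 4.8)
The plan is to reduce the statement to the single assertion that there exists a \emph{finite} $(X,Y)$-translator, and to deduce everything else by bookkeeping. Since $F$ itself is an $(X,Y)$-translator, the set of translators is nonempty; and the observation that $\vert V {-} \{1\}\vert \ge \rank(F)$ for every translator $V$ shows that the values $\vert V {-} \{1\}\vert$ form a nonempty collection of cardinals, each $\ge \rank(F)$. Hence $\kappa$ is a well-defined cardinal with $\kappa \ge \rank(F)$, so the case split ($\kappa > \rank(F)$ versus $\kappa = \rank(F)$) is exhaustive. Granting a finite translator, $\kappa$ is a finite cardinal, so $\kappa \in \naturals$; this settles the first case, where $\operatorname{d}(X,Y) = \kappa$. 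In the second case, $\kappa = \rank(F)$ is witnessed by a translator $V$ with $\vert V\vert = 1{+}\rank(F)$, so the translators of cardinal $1{+}\rank(F)$ form a nonempty family of \emph{finite} sets; for each such $V$ the subset $(X \leftwreath V \rightwreath Y)_{\ge 3}$ of $V$ is finite, so $\operatorname{d}(X,Y)$ is the minimum of a nonempty set of natural numbers and again lies in $\naturals$.

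Thus the whole content is the existence of a finite translator, and I would first reformulate the translator condition. Since $X \leftwreath F$ and $F \rightwreath Y$ are trees (Theorem~\ref{th:tree}), every full subgraph of either is a forest; so for a $1$-containing set $V$ the graph $X \leftwreath V$ is a tree if and only if it is connected, and likewise for $V \rightwreath Y$. Rooting each tree at $1$, connectedness of the full subgraph on $V$ is exactly the statement that $V$ contains every vertex of the reduced geodesic from $1$ to each of its points; a short ``meet'' argument shows this geodesic-to-$1$ closure is equivalent to closure under all pairwise geodesics. Hence an $(X,Y)$-translator is precisely a $1$-containing generating set that is convex both in the $X$-tree and in the $Y$-tree.

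With this reformulation I would build the candidate as a \emph{closure}. Starting from the finite $1$-containing generating set $\{1\}\cup X^{\pm 1}$, form the smallest superset $V$ that is convex in both trees, obtained by alternately adjoining the (finitely many) vertices of the $X$-geodesics and then of the $Y$-geodesics joining points already present. Each single step keeps the set finite and preserves the generating property, so the only point in doubt is that the alternation terminates, i.e.\ that the resulting closure $V$ is finite; granting finiteness, $V$ is the desired finite $(X,Y)$-translator.

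The main obstacle is exactly this finiteness. The naive bi-Lipschitz comparison of the two word metrics is not enough: a $Y$-hull step can a priori enlarge the $X$-diameter, and iterating the crude estimate only yields a bound that degrades by a constant at each round. The correct tool should be the $0$-hyperbolicity of the trees together with bounded cancellation for the change of basis: a $Y$-geodesic is an $X$-quasigeodesic and so $X$-fellow-travels, within a constant $C = C(X,Y)$, the $X$-geodesic with the same endpoints, and symmetrically. I would use this to confine the closure to a bounded $X$-neighbourhood of the finite $X$-convex hull of $\{1\}\cup X^{\pm 1}$, whence, the $X$-tree being locally finite, $V$ is finite. The delicate step, and the one I expect to demand the most care, is precisely upgrading the additive fellow-travelling constant into a genuine \emph{global} bound on the closure, rather than one that grows with each alternation; this is where the tree combinatorics, and not merely the metric comparison, must be exploited.
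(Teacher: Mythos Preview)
Your reduction to the existence of a finite $(X,Y)$-translator is correct, as is your reformulation of the translator condition as simultaneous convexity in the two Cayley trees. But the proposal stops exactly where the content of the lemma lies: you yourself observe that the alternating bi-convex closure is the only issue, that the naive fellow-travelling estimate loses an additive constant per round, and then you defer the resolution to unspecified ``tree combinatorics''. That deferred step \emph{is} the lemma; everything before it is bookkeeping. As written, this is a correct diagnosis of where the difficulty sits, not a proof. In particular, invoking bounded cancellation presupposes a nontrivial theorem about change of basis, and even granting it you have not shown how to prevent the neighbourhood radius from growing with the number of alternations.

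The paper's proof (due to Krsti\'c) avoids the termination question entirely by a direct construction. For a finite $1$-containing $W\subseteq F$, write $\overline{\mathbb{X}}W$, $\overline{\mathbb{Y}}W$ for the tree-closures of $W$ in $X\leftwreath F$ and $F\rightwreath Y$, and $\breve{\mathbb{X}}W$, $\breve{\mathbb{Y}}W$ for the vertex-set of the $1$-containing component of $X\leftwreath W$, $W\rightwreath Y$. One sets
\[
V \;=\; \breve{\mathbb{Y}}\,\overline{\mathbb{X}}\,\overline{\mathbb{Y}}\,\overline{\mathbb{X}}\,\bigl(\{1\}\cup Y^{\pm1}\bigr),
\]
a composite of four operations each preserving finiteness, so $V$ is finite by construction and no iteration occurs. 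That $V\rightwreath Y$ is a tree and that $V\supseteq\{1\}\cup Y^{\pm1}$ generates $F$ are immediate. The nontrivial point is that $X\leftwreath V$ is also a tree, i.e.\ $\breve{\mathbb{X}}V=V$. This is proved not by any metric estimate but by a minimality argument: one checks that $(\breve{\mathbb{X}}V)\cdot(\{1\}\cup Y^{\pm1})\subseteq \breve{\mathbb{X}}\bigl(V\cdot(\{1\}\cup Y^{\pm1})\bigr)$, using that $X\leftwreath F$ is a \emph{right} $F$-tree and that $V\supseteq\overline{\mathbb{X}}(\{1\}\cup Y^{\pm1})$; together with the characterisation of $V$ as the inclusion-smallest $1$-containing subset of $\overline{\mathbb{X}}\,\overline{\mathbb{Y}}\,\overline{\mathbb{X}}(\{1\}\cup Y^{\pm1})$ satisfying a certain closure under right multiplication by $\{1\}\cup Y^{\pm1}$, this forces $\breve{\mathbb{X}}V=V$. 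The trick is thus to replace your open-ended alternation of hulls by three hull steps followed by one \emph{component} step, and to verify closure algebraically rather than metrically.
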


\begin{proof}[Proof  \normalfont{(Krsti\'c(1989), here streamlined).}]
For each finite  \mbox{$1$}-containing \vspace{-.5mm} subset $W$ of $F$,
we let \mbox{$\breve{\mathbb{X}} W$} and \mbox{$\breve{\mathbb{Y}}  W $}
denote the vertex-sets  of the  \mbox{$1$}-containing components  of the forests~\mbox{$ X \leftwreath W$}
and \mbox{$W\rightwreath Y$} respectively; also, we let
\mbox{$\overline{\mathbb{X}} W $} and \mbox{$ \overline{\mathbb{Y}} W $}   denote the  vertex-sets  of the
tree-closures of~$W$ in the trees  \mbox{$X\leftwreath F$} and \mbox{$F\rightwreath Y$} respectively, where
  the \textit{tree-closure of~$W$} in a tree
is the  inclusion-smallest subtree which includes~$W$.
We  have now defined four self-maps of the set   of  finite  \mbox{$1$}-containing subsets of $F$.

Set \mbox{$\tilde Y := \{1\} \cup  Y^{\pm 1}$}
 and \mbox{$V :=    \breve{\mathbb{Y}} \overline{\mathbb{X}} \,
\overline{\mathbb{Y}}\, \overline{   \mathbb{X}}\,  \tilde Y$}.

Then $V$ is  a finite  \mbox{$1$}-containing subset of $F$, \mbox{$V \rightwreath Y$} is a tree, and

  \vspace{-6mm}

\begin{equation} \label{eq:inc}
 V  = \breve{\mathbb{Y}}  \bigl(\overline{\mathbb{X}} ( \overline{\mathbb{Y}} \,
\overline{\mathbb{X}}\,  \tilde Y)\bigr)
 \supseteq   \breve{\mathbb{Y}}\bigl(
  (\overline{\mathbb{Y}}\, \overline{ \mathbb{X}} \, \tilde Y)\bigr)
=   \overline{\mathbb{Y}} \,  \overline{\mathbb{X}}\,  \tilde Y  \supseteq
   \overline{\mathbb{X}}\,  \tilde Y    \supseteq \tilde Y.\vspace{-1.5mm}
\end{equation}
In particular,  $V$ is an  $F$-generating set.

We now prove that  \mbox{$ (\breve{\mathbb{X}}\mkern2mu V){\cdot} \tilde Y \subseteq
\breve{\mathbb{X}} \mkern 2mu ( V \mkern-2mu {\cdot} \tilde Y)$}.
Let \mbox{$y \in  \tilde Y$} and  \mbox{$v \in  \breve{\mathbb{X}}\mkern.5mu V$}; thus,
  \mbox{$V \supseteq \overline{\mathbb{X}}\mkern2mu\{v,1 \}$}.  Then
\mbox{$V \supseteq \overline{\mathbb{X}}\mkern2mu \{v,1, y^{-1}  \}$}, since
  \mbox{$V  \supseteq  \overline{\mathbb{X}}\, \tilde Y$},  by~\eqref{eq:inc}.
Now \mbox{$V{\cdot} \tilde Y  \supseteq (\overline{\mathbb{X}}\mkern2mu \{v,1,y^{-1}  \}) {\cdot} y =
\overline{\mathbb{X}}\mkern2mu \{ v{\cdot}y, y, 1\}$},
since \mbox{$X \leftwreath F$} is a right $F$-tree.
Thus, \mbox{$v{\cdot}\mkern1muy \in
\breve{\mathbb{X}} \mkern 2mu ( V \mkern-2mu {\cdot} \tilde Y)$}, as desired.

It follows from the definition of \mbox{$\breve {\mathbb{Y}}$} that
 $V$ is the inclusion\d1small\-est $1$-containing subset of~$F$  such that
\mbox{$\overline{ \mathbb{X}} \,\overline{\mathbb{Y}}\,\overline{  \mathbb{X}} \, \tilde Y
\cap  V \mkern-2mu{\cdot} \tilde Y    \,\subseteq\, V$}.  Now
\mbox{$\breve{\mathbb{X}}\mkern2mu V$} is a $1$-containing subset of $V$, and \vspace{-1mm}
$$\overline{ \mathbb{X}} \,\overline{\mathbb{Y}}\,\overline{  \mathbb{X}} \, \tilde Y
\cap (\breve{\mathbb{X}}\mkern2mu V) {\cdot} \tilde Y \,\,\subseteq\,\,
\breve{\mathbb{X}}(\overline{\mathbb{X}} \,\overline{\mathbb{Y}}\,\overline{\mathbb{X}} \, \tilde Y)
\cap  \breve{\mathbb{X}} \mkern 2mu ( V \mkern-2mu{\cdot} \tilde Y )  \,\,\subseteq\,\,
\breve{\mathbb{X}}(\overline{\mathbb{X}} \,\overline{\mathbb{Y}}\,\overline{  \mathbb{X}} \, \tilde Y
\cap     V \mkern-2mu{\cdot} \tilde Y)  \,\,\subseteq\,\, \breve{\mathbb{X}}(V).\vspace{-1.5mm}$$
It follows from the minimality property of $V$  that \mbox{$\breve{\mathbb{X}} V  = V$}. Thus,
\mbox{$X \leftwreath V$} is a tree.

 Hence, $V$ is a finite \mbox{$(X,Y)$}-translator.
\end{proof}

\begin{theorem}[Whitehead(1936b)]\label{th:main} With {\normalfont Notation~\ref{not:movesbis}}, if $X$ and $Y$ are
local-minimum points for $h$,   then either
  \mbox{$X^{\pm 1}  =   Y^{\pm 1}$}
or some  Whitehead transform $Y'$ of $Y$
 satisfies \mbox{$h(Y') = h(Y)$} and \mbox{$\operatorname{d}(X,Y') < \operatorname{d}(X,Y)$}.
\end{theorem}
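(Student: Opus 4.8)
The plan is to fix a minimal $(X,Y)$-translator $V$ realizing $\operatorname{d}(X,Y)$ and to read off from the finite graph $G := X \leftwreath V \rightwreath Y$ a single Whitehead transform $Y'$ of $Y$ that strictly decreases $\operatorname{d}$ without increasing $h$. Since $Y$ is a local-minimum point for $h$, every Whitehead transform already satisfies $h(Y') \ge h(Y)$, so a non-increase of $h$ upgrades automatically to the required equality $h(Y') = h(Y)$. The real work therefore separates into two bookkeeping problems on the same move: a \emph{length} estimate, governed by the local-minimality of both $X$ and $Y$, and a \emph{complexity} estimate, governed by the fact that $X \leftwreath V$ and $V \rightwreath Y$ are both trees on the common vertex-set $V$.

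First I would dispose of the trivial half of the dichotomy: if $X^{\pm 1} = Y^{\pm 1}$ there is nothing to prove, so assume $X^{\pm 1} \ne Y^{\pm 1}$. In that case the ``double-star'' translator, in which every vertex of $V - \{1\}$ is a leaf of both trees, is unavailable, so either $\kappa > \rank(F)$ or every minimal translator has some vertex of $V - \{1\}$ of $G$-valence at least $3$. Either way $\operatorname{d}(X,Y) > 0$ and $G$ genuinely branches away from the basepoint $1$. It then suffices to exhibit one translator $V'$ for the pair $(X,Y')$ for which the defining quantity (the cardinal $\vert V' - \{1\}\vert$ if $\kappa > \rank(F)$, otherwise the branch-point count $\bigl\vert (X \leftwreath V' \rightwreath Y')_{\ge 3}\bigr\vert$) is strictly smaller than the minimal value for $(X,Y)$, since $\operatorname{d}$ is defined as a minimum.

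Next I would locate the vertex at which to operate by invoking Gersten's two-trees lemma: among the vertices of $V - \{1\}$ it produces one, say $w$, that is a leaf of one of the two trees --- say of $V \rightwreath Y$ --- while sitting at a branch of the other. Let $y \in Y^{\pm 1}$ be the label of the unique $Y$-edge of $G$ at $w$. The transform $Y'$ I would use has multiplier $y$: it slides $w$ across that edge, left- or right-multiplying by $y^{\pm1}$ exactly those members of $Y$ whose $Y$-directions lie on $w$'s side of the cut of the Cayley tree $F \rightwreath Y$ determined by the pair $(w,y)$. Choosing $w$ to be a leaf is what makes the move clean: the slide erases the branch point at $w$ and yields a translator $V'$ for $(X,Y')$ with strictly smaller complexity, giving $\operatorname{d}(X,Y') < \operatorname{d}(X,Y)$. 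The verifications that $Y'$ is a genuine Whitehead transform and that $V'$ is again an $(X,Y')$-translator I expect to be routine applications of the path-manipulation rules.

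Finally comes the length estimate $h(Y') \le h(Y)$, and this is where I expect the main obstacle to lie. Translating Whitehead's intersection count, the change $h(Y') - h(Y)$ is the net number of traversals, by the words of $R$ read in $F \rightwreath Y$, that cross the cut determined by $(w,y)$. The direction $h(Y') \ge h(Y)$ is free from the local-minimality of $Y$; the subtle inequality $h(Y') \le h(Y)$ \emph{cannot} come from $Y$ alone. It must be forced by playing the local-minimality of $X$ against that of $Y$ through the shared translator $V$, so that the chosen cut is balanced and the slide is length-neutral. Carrying out this balancing --- showing that the leaf-slide dictated by the two-trees lemma is exactly a length-preserving, complexity-decreasing move --- is the crux of the argument, and is the graph-theoretic incarnation of Whitehead's innermost-disk surgery on his three-manifold model.
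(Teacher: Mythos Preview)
Your high-level plan matches the paper's: fix a minimal translator $V$, locate a suitable cut in the finite graph $G = X\leftwreath V\rightwreath Y$, read off a Whitehead transform $Y'$, build a translator $V'$ for $(X,Y')$ with strictly smaller complexity, and show $h(Y')\le h(Y)$ (hence $=h(Y)$ by local-minimality of $Y$). You also correctly flag the length estimate as the crux. But your proposal stops precisely where the real argument begins, and two of your concrete choices diverge from what actually works.

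First, the cut. You propose to cut at a vertex $w$ that is a leaf of $V\rightwreath Y$, using the unique $Y$-edge there. In the paper the disconnecting object is always an \emph{$X$-edge} $v_\ast\overset{x_\ast\cdot}{\rightleftharpoons}x_\ast{\cdot}v_\ast$ of $X\leftwreath V$; the bipartition $V=V_0\sqcup V_1$ it induces is what defines the Whitehead transform $Y'$ via the characteristic function $\chi$, and the accompanying $y_\ast\in Y^{\pm1}$ is chosen by a separate mechanism depending on which of two regimes one is in. Cutting a $Y$-edge, as you suggest, does not by itself produce a partition of $Y^{\pm1}$ of Whitehead type controlled by the $X$-side data, and it is the $X$-side data that must eventually supply the other local-minimality hypothesis.

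Second, the length estimate is not a single ``balancing'' but two genuinely different arguments, because the proof bifurcates on whether $\operatorname{d}(X,Y)\le\rank F$ or $\operatorname{d}(X,Y)>\rank F$. In the first regime $|V|=1+\rank F$, so every $\hat\iota_Y y$ and $\hat\iota_X x$ is a singleton; the key inequality collapses to $0\le h(Y')-h(Y)\le h(X_{\vert x_\ast})-h(Y_{\vert y_\ast})$, but this is only usable after one proves $y_\ast$ can be \emph{matched} to $x_\ast$. The paper does this by a determinant trick: the change-of-basis matrix $n_{X,Y}$ over $\integers$ has determinant $\pm1$, so some bijection $\psi:X\xrightarrow{\sim}Y$ has $x\in X\mkern-3mu\operatorname{-absupp}(\psi x)$ for every $x$; summing the displayed inequality over $x\in X$ with $y_\ast=\psi(x_\ast)$ gives $0\le h(X)-h(Y)$, and symmetry in $X,Y$ forces equality termwise, hence $h(Y')=h(Y)$. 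Nothing like ``pick a leaf'' produces this matching. In the second regime some $\hat\iota_Y y_\ast$ has $\ge2$ vertices, and the $X$-edge is taken inside $X\leftwreath(\hat\iota_Y y_\ast)$; the inequality now has residual ``two-part'' terms, and one must prove an auxiliary bound $\widetilde{\,h}(\operatorname{proper})\le\widetilde{\,h}(\operatorname{south})$ by induction on $|\operatorname{south}_{(v_\ast,y_\ast)}(\hat\iota_X x_\ast)|$, then symmetrize in $X,Y$ to get $\min\{\widetilde{\,h}\text{-south},\widetilde{\,h}\text{-north}\}\le\widetilde{\,h}(\operatorname{proper})$. Only after this does one choose the specific third-stage triple with $v_\ast$ a valence-one vertex of $(\hat\iota_X x_\ast)\rightwreath Y$ satisfying the numerical balance $\widetilde{\,h}(v_\ast\xrightarrow{x_\ast\cdot}x_\ast{\cdot}v_\ast)\le h(X_{\vert x_\ast})/2$, which forces $h(Y')=h(Y)$.

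In short: your outline is the right shape, but the sentence ``playing the local-minimality of $X$ against that of $Y$ through the shared translator $V$'' hides two nontrivial and structurally different arguments (a determinant-matching in the minimal regime, an induction plus $X\leftrightarrow Y$ symmetry in the supercritical regime), neither of which is suggested by the leaf-slide picture you describe.
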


\begin{proof}[Proof \normalfont{(Whitehead(1936b), here translated).}] For all \mbox{$ v, g  \in F$}, we let
\mbox{$v\overset{X: g{\cdot} }{\text{\textbf{--}}\cdots\mkern-2mu\to}{\mkern-10mu-} g{\cdot}v$}
denote\vspace{-.5mm} the unique reduced \mbox{$(X\leftwreath F)$}-path from $v$ to~$g{\cdot}v $,
and \mbox{$ v\overset{ {\cdot}g\mkern2mu : Y}{\text{\textbf{--}}\cdots\mkern-2mu\to}{\mkern-10mu-} v{\cdot}g $}
denote the unique reduced \mbox{$(F\rightwreath Y)$}-path from $v$ to $v{\cdot}g$.
If \mbox{$g=1$}, then these paths have length zero.

We shall obtain information about
   Whitehead transforms of $Y$ that are constructed using
  a procedure that depends  on
 \mbox{$\operatorname{d}(X,Y)$}.
We begin by describing  features that apply whenever we have an   \mbox{$(X,Y)$}-translator~$V$.

For each \mbox{$x \in X^{\pm 1}$}, we set \mbox{$\hat\iota_X x := x^{-1}{\cdot} V \cap V$} and
\mbox{$\hat \tau_Xx:= x {\cdot} V \cap V =  x {\cdot}  \hat \iota_X$}.
For each \mbox{$y \in Y^{\pm 1}$}, we set \mbox{$\hat\iota_Y y :=  V {\cdot}y^{-1} \cap V$} and
\mbox{$\hat \tau_Yy:= V  {\cdot} y   \cap V = \hat \iota_Y {\cdot} y $}.

Consider any \mbox{$y \in Y^{\pm 1}$}.  We shall now show that
  \mbox{$X \leftwreath (\hat \iota_Y y)$} and \mbox{$X \leftwreath (\hat \tau_Y y)$}
are subtrees of the tree  \mbox{$X \leftwreath V$}, and that
\mbox{$\bigl(X \leftwreath (\hat \iota_Y y)\bigr){\cdot} y =  X \leftwreath (\hat \tau_Y y).$}
We first show that \mbox{$\hat\iota_Y y \ne \emptyset$}.
  Since $V$  generates~$F$,
there exists some
\mbox{$u \in V {-} \langle Y {-}\{y\}^{\pm 1}\rangle$}.
Let \mbox{$(y_1, y_2, \ldots, y_\ell)$} be the reduced \mbox{$Y^{\pm 1}$}-sequence for $u$; thus,
 there exists some \mbox{$k \in \{1,2, \ldots, \ell\}$} such that \mbox{$\{y_k\}^{\pm 1} = \{y\}^{\pm 1}$}.
The reduced \mbox{$(F\rightwreath Y)$}-path from $1$ to $u$ is \nopagebreak then\vspace{-1.5mm}
 $$ 1 = u_0 \xrightarrow{{\cdot}y_1}{\mkern-9mu-} u_1 \xrightarrow{{\cdot}y_2}{\mkern-9mu-} u_2 \cdots
 \xrightarrow{{\cdot}y_\ell}{\mkern-9mu-} u_\ell = u; \vspace{-1.5mm}$$
 this is a \mbox{$(V\rightwreath Y)$}-path,   since the endpoints lie in $V$,
and the subpath  \mbox{$u_{k-1} \xrightarrow{{\cdot}y_k}{\mkern-9mu-} u_k $}
meets  \mbox{$\hat\iota_Y y$}, as desired.
Now consider any \mbox{$v, w \in \hat\iota_Yy$}.
Then \mbox{$v{\cdot}y, w{\cdot}y \in \hat \tau_Yy$}.
Let \mbox{$(x_\ell, x_{\ell-1}, \ldots, x_1)$} be the reduced \mbox{$X^{\pm 1}$}-sequence for
  \mbox{$  w{\cdot}v^{-1}=  ( w{\cdot}y){\cdot} ( v{\cdot}y)^{-1}$}.
The reduced \mbox{$(X \leftwreath F)$}-paths\vspace{-1.5mm}
$$   v = v_0 \xrightarrow{x_1{\cdot}}{\mkern-9mu-} v_1 \cdots \xrightarrow{x_\ell{\cdot}}{\mkern-9mu-} v_\ell = w
 \quad \text{and} \quad   v{\cdot}y = v_0{\cdot}y \xrightarrow{x_1{\cdot}}{\mkern-9mu-}
v_1{\cdot}y \cdots \xrightarrow{x_\ell{\cdot}}{\mkern-9mu-} v_\ell{\cdot}y = w{\cdot}y\vspace{-1mm}$$
are  \mbox{$(X \leftwreath V)$}-paths,  since their endpoints lie in $V$. Thus,
\mbox{$\{v_0, v_1, \ldots, v_\ell\}{\cdot}\{1,y \} \subseteq V.$}
This proves that \mbox{$X \leftwreath (\hat \iota_Y y)$} is a subtree of the tree  \mbox{$X \leftwreath V$}.
Also, \mbox{$\bigl(X \leftwreath (\hat \iota_Y y)\bigr){\cdot} y =  X \leftwreath (\hat \tau_Y y),$}
and   \mbox{$X \leftwreath (\hat \tau_Y y)$} is a subtree of the tree  \mbox{$X \leftwreath V$}.

Analogous assertions hold for \mbox{$(\hat \iota_X x)  \rightwreath Y$}
and  \mbox{$(\hat \tau_X x)  \rightwreath Y$}.

Consider any   \mbox{$v,w \in V$}  and
 any  \mbox{$(X\leftwreath V \rightwreath Y)$}-path $p$ from $v$ to $w$.
Let \mbox{$(x_1{\cdot}, x_2{\cdot}, \ldots, x_\ell{\cdot})$} be the
sequence of \mbox{$X^{\pm 1}{\cdot}$}\,-labels encountered along $p$.  We call the
\mbox{$X^{\pm 1}$}-sequence  \mbox{$(x_\ell, \ldots, x_2,x_1)$}  the  \textit{left \mbox{$X^{\pm1}$}-label of $p$},
and call \mbox{$g :=x_\ell \cdots x_2 x_1$}  the \textit{left $F$-label of $p$}.
Let \mbox{$({\cdot}y_1, {\cdot}y_2 , \ldots, {\cdot}y_{\ell'})$} be the
sequence of \mbox{${\cdot}Y^{\pm 1}$}\,-labels encountered along $p$.  We call the
\mbox{$Y^{\pm 1}$}-sequence  \mbox{$(y_1, y_2, \ldots, y_{\ell'})$}  the \textit{right \mbox{$Y^{\pm1}$}-label of $p$},
and call \mbox{$g':=y_1 y_2 \cdots y_{\ell'}$}   the \textit{right $F$-label of $p$}.
It is not difficult to see that \mbox{$g   v g' = w$} in $F$.
We may use ordinary path reductions and assume that $p$ is a reduced
\mbox{$(X\leftwreath V \rightwreath Y)$}-path without changing  the  left and right $F$-labels.
If the right \mbox{$Y^{\pm1}$}-label of $p$ is still not a reduced $Y^{\pm 1}$-sequence, then
 $p$ has some subpath $p'$ of the form\vspace{-2mm}
$$
u \xrightarrow{ {\cdot}y}{\mkern-9mu-} u{\cdot} y
\overset{  X: h   {\cdot} }{\text{\textbf{--}}\cdots\mkern-2mu\to}{\mkern-9mu-} h{\cdot} u
  {\cdot} y \xrightarrow{{\cdot}  y^{-1} } {\mkern-9mu-} h   {\cdot} u,\vspace{-1mm}
$$
 for some  \mbox{$h \in F{-}\{1\}$}.  Since \mbox{$X \leftwreath V$} is a tree,
we have the \mbox{$(X\leftwreath V)$}-path $p''$ which is
\mbox{$u\overset{X : h{\cdot} }{\text{\textbf{--}}\cdots\mkern-2mu\to} h  {\cdot} u
$}.  The \mbox{$(X\leftwreath V \rightwreath Y)$}-path obtained from $p$ by
 replacing $p'$ with $p''$
is said to be a  \textit{right $Y$-reduction of~$p$}.
This gives a shorter \mbox{$(X\leftwreath V \rightwreath Y)$}-path  from $v$ to $w$
with the same left and right $F$-labels,
the same left \mbox{$X^{\pm1}$}-label, and a  shorter right \mbox{$Y^{\pm1}$}-label.  Similar considerations give
 \textit{left $X\mkern-4mu$-reductions of~$p$}.
Any \mbox{$(X\leftwreath V \rightwreath Y)$}-path yields an \mbox{$(X\leftwreath V \rightwreath Y)$}-path
with reduced left \mbox{$X^{\pm1}$}-~and right \mbox{$Y^{\pm1}$}-labels after applying ordinary,
 left $X$-, and right $Y$-reductions sufficiently often.

Similar considerations apply for \textit{cyclic} ordinary, left $X$-, and right $Y$-reductions of closed
\mbox{$(X\leftwreath V \rightwreath Y)$}-paths; these operations
may change where the path is based.

We write \mbox{$\text{Paths}(X\leftwreath V \rightwreath Y)$} to denote the set of all
\mbox{$(X\leftwreath V \rightwreath Y)$}-paths.
We  construct a  map   \mbox{$F \to \operatorname{Paths}(X\leftwreath V \rightwreath Y) $}
which assigns  to each   \mbox{$g\in F$}  a closed \mbox{$(X\leftwreath V \rightwreath Y)$}-path
based at $1$ whose left \mbox{$X^{\pm1}$}-label is the reduced \mbox{$X^{\pm 1}$}-sequence for~$g^{-1}$,
and whose right  \mbox{$Y^{\pm1}$}-label is the  reduced \mbox{$Y^{\pm 1}$}-sequence  for \mbox{$g$}.
One way to do this is first to choose,  for  each \mbox{$x \in X$},   some \mbox{$v_x \in \hat \iota_Xx$},
and then  the \mbox{$(X\leftwreath V \rightwreath Y)$}-path \vspace{-2mm}
$$  1
\overset{  {\cdot} x{\cdot} v_x : Y}{\text{\textbf{--}}\cdots\mkern-2mu\to}{\mkern-9mu-}
 x{\cdot} v_x
\xrightarrow {x^{-1} {\cdot} }{\mkern-9mu-}
 v_x
\overset{  {\cdot} v_x^{-1} : Y  }{\text{\textbf{--}}\cdots\mkern-2mu\to}{\mkern-9mu-}
1 $$
 has left \mbox{$X^{\pm1}$}-label~\mbox{$(x^{-1})$}, which is the reduced \mbox{$X^{\pm 1}$}-sequence for $x^{-1}$.
Using inversion and concatenation of paths, we may now
assign to each \mbox{$g \in F$} a closed \mbox{$(X\leftwreath V \rightwreath Y)$}-path
based at~$1$ whose left \mbox{$X^{\pm 1}$}-label is the  reduced $X^{\pm 1}$-sequence for $g^{-1}$.
The left $F$-label is then~$g^{-1}$, and the right $F$-label must then be \mbox{$g$}.
By applying right $Y$-reductions, we obtain a
closed \mbox{$(X\leftwreath V \rightwreath Y)$}-path
based at $1$ whose left  \mbox{$X^{\pm 1}$}-label is still the reduced \mbox{$X^{\pm 1}$}-sequence for $g^{-1}$,
whose right  \mbox{$Y^{\pm 1}$}-label is a  reduced \mbox{$Y^{\pm 1}$}-sequence,
 and whose right $F$-label is still~\mbox{$g$}.
We call this the \textit{chosen  \mbox{$(X\leftwreath V \rightwreath Y)$}-path
representing $g$}.
The   reduced
$Y^{\pm 1}$-sequence for~$g$ and the reverse of the reduced $X^{\pm 1}$-sequence for $g^{-1}$
 have been interlaced to form a closed \mbox{$(X\leftwreath V \rightwreath Y)$}-path based at $1$.
For our counting purposes, the reverse of the reduced $X^{\pm 1}$-sequence for $g^{-1}$ contains the same
information as the reduced $X^{\pm 1}$-sequence for $g$; previous authors
 amalgamated \mbox{$F \rightwreath (X^{-1})$} and \mbox{$F \rightwreath Y$}
over their vertex-sets via the inversion map on $F$.

We now  construct a map \mbox{$R \to \operatorname{Paths}(X\leftwreath V \rightwreath Y)$}.
We map each  straight word $r$ contained in $R$ to the
chosen  \mbox{$(X\leftwreath V \rightwreath Y)$}-path
representing $r$.  For each cyclic word $r$ contained in $R$, we choose
an element $g$ of $r$, and consider the chosen  \mbox{$(X\leftwreath V \rightwreath Y)$}-path
representing $g$, and apply cyclic ordinary, left $X$-, and right $Y$-reductions,
until we get a closed  \mbox{$(X\leftwreath V \rightwreath Y)$}-path
whose right \mbox{$Y^{\pm 1}$}-label is a  cyclically reduced $Y^{\pm 1}$-sequence
and whose left \mbox{$X^{\pm 1}$}-label is a cyclically reduced $X^{\pm 1}$-sequence;
then the right  $F$-label is a   conjugate of \mbox{$g$},
and the left $F$-label is a conjugate of $g^{-1}$.
We call this the \textit{chosen  \mbox{$(X\leftwreath V \rightwreath Y)$}-path
representing $r$}.

Our   map \mbox{$R \to \operatorname{Paths}(X\leftwreath V \rightwreath Y)$} gives
 $R$   the structure of a set of closed
\mbox{$(X\leftwreath V \rightwreath Y)$}-paths, with the straight words being based at $1$.
We may now speak of the number of times an element  of~$R$ traverses a given
 \mbox{$(X\leftwreath V \rightwreath Y)$}-edge~$e$, and by  summing over all
elements of~$R$, we  may speak of the number of times $R$ traverses~$e$, and denote the number
by~\mbox{$\widetilde{\,h}(e)$}.

For each length-one \mbox{$(X \leftwreath V)$}-path \mbox{$v \xrightarrow{x{\cdot}} {\mkern-9mu-} w$},
we let \mbox{$ v  \overset {x {\cdot}}{\rightleftharpoons }  w $} denote the
\mbox{$(X \leftwreath V)$}-edge it traverses, and set
  \mbox{$\widetilde{\,h}(v \xrightarrow{x{\cdot}} {\mkern-9mu-} w):=
 \widetilde{\,h}( v  \overset {x {\cdot}}{\rightleftharpoons }  w)$};
thus, \mbox{$ w  \overset {x^{-1} {\cdot}}{\rightleftharpoons }  v $}  equals
 \mbox{$ v  \overset {x {\cdot}}{\rightleftharpoons }  w $},
and \mbox{$\widetilde{\,h}(w \xrightarrow{x^{-1}{\cdot}} {\mkern-9mu-} v)$} equals
\mbox{$ \widetilde{\,h}(v \xrightarrow{x{\cdot}} {\mkern-9mu-} w)$}.
For any element \mbox{$x $} of  \mbox{$X^{\pm 1}$}, and subsets \mbox{$V_0$} and \mbox{$V_1$} of \mbox{$V$},
we set\vspace{-1mm} $$\textstyle\widetilde{\,h} (V_0 \xrightarrow{x{\cdot}}{\mkern-9mu-} V_1):=
\sum\limits_{v \in  V_0 \cap (x^{-1} {\cdot} V_1)}   \widetilde{\,h}( v \xrightarrow{x{\cdot}} {\mkern-9mu-} x{\cdot} v).
\vspace{-2mm}$$
Notice that
 \mbox{$\widetilde{\,h}(V \xrightarrow{x{\cdot}} {\mkern-9mu-} V)
= \widetilde{\,h}( \hat\iota_X x \xrightarrow{x{\cdot}}{\mkern-9mu-}  \hat\tau_X x) = h(X_{\vert x})$},
since the left \mbox{$X^{\pm 1}$}-labels of the chosen   \mbox{$(X\leftwreath V \rightwreath Y)$}-paths are
 reduced,  and cyclically reduced for cyclic words.

Analogous notation applies with $Y$ in place of $X$.

 For any \mbox{$ x_\ast  \in X^{\pm 1}$},     \mbox{$y_\ast  \in   Y^{\pm 1}$}, and
 \mbox{$v_\ast \in \hat \iota_X x_\ast$}, we   say that \mbox{$(v_\ast, x_\ast,y_\ast)$}
 is a \textit{first-stage triple}, and associate to it
all of the following data.

The \mbox{$(X \leftwreath V)$}-edge \mbox{$ v_\ast \overset {x_\ast{\cdot}}{\rightleftharpoons }
     x_\ast{\cdot} v_\ast  $}
is called the \textit{disconnecting} edge.
Let   $V_0$  denote the
vertex-set  of the $1$-containing component  of the forest
\mbox{$  (X \leftwreath V) -\{  v_\ast \overset {x_\ast{\cdot}}{\rightleftharpoons }      x_\ast{\cdot} v_\ast  \},$}
  and set \mbox{$V_1:= V {-} V_0$}, the vertex-set  of the other  component.
We let \mbox{$\chi:V \to \{0,1\}$} denote the characteristic function of $V_1$; thus,
  \mbox{$v \in V_{\chi(v)}$}  for each   \mbox{$v \in V$}.
We   define a map
\mbox{$\hat \chi: \hat \iota_Y (Y^{\pm 1}) \to \{0,1\}$} as follows.
For \mbox{$j \in \{1,2\}$}, let \mbox{$Y^{\pm 1}_{j\text{-part}}$}
denote the set of those \mbox{$y \in Y^{\pm 1}{-}\{y_\ast\}$}
such that $\chi$ restricted to \mbox{$\hat \iota_Y y $} takes exactly $j$ values.
For each \mbox{$y \in Y^{\pm 1}_{1\text{-part}}$},
$\chi$ restricted to \mbox{$\hat \iota_Y y $} takes exactly one value,
and we define
\mbox{$\hat\chi(\hat \iota_Y y )$} to be that value.
Let \mbox{$\chi_F:F \to \{0,1\}$}  denote the characteristic function  of the vertex-set of that component of
\mbox{$(X \leftwreath F)-  \{v_\ast \overset {x_\ast{\cdot}}{\rightleftharpoons } x_\ast{\cdot}v_\ast\}$}
which does \textit{not} contain~$1$; the restriction of $\chi_F$ to $V$ is then $\chi$.
For each \mbox{$y \in Y^{\pm 1}_{2\text{-part}}$}, we define
 \mbox{$\hat\chi(\hat \iota_Y y ):= \chi_F(v_\ast{\cdot}y ^{-1})$}.
To complete  the definition of the map \mbox{$\hat \chi:\hat\iota_Y(Y^{\pm 1}) \to \{0,1\}$},
we set \mbox{$\hat\chi(\hat\iota_Y y_\ast):=  1-\hat\chi(\hat \iota_Y y_\ast^{-1})$}.

Let \mbox{$y_\dag$}   denote the element of \mbox{$\{y_\ast\}^{\pm 1}$}
such that \mbox{$\hat\chi(\hat \iota_Y y_\dag)= 0$};
hence, \mbox{$\hat\chi(\hat \tau_Y y_\dag)= \hat\chi(\hat \iota_Y y_\dag^{-1}) = 1$}.
For each \mbox{$y \in Y^{\pm 1} {-}\{y_\dag\}^{\pm 1}$},
we set \mbox{$y':= y_\dag^{\, \hat\chi(\hat\iota_Y y)} {\cdot} y{\cdot} y_\dag^{-\hat\chi(\hat \tau_Y y)}$},
while, for each \mbox{$y \in   \{y_\dag\}^{\pm 1}$}, we set \mbox{$y':= y$}.
We then set \mbox{$Y':=  \{ y' \mid y \in Y \}$}.  Thus, \mbox{$Y'$} is a
 Whitehead transform of~\mbox{$Y$}.
Since \mbox{$Y$} is a local-minimum point for $h$,
\mbox{$h(Y) \le h(Y')$}.
It is not difficult to see from the definition of~$Y'$  that, for each \mbox{$y \in Y^{\pm 1}{-} \{y_\dag\}^{\pm 1}$},
\mbox{$h(Y'_{\vert y'}) \ge h(Y_{\vert y})$}.  Similarly,
\mbox{$h(Y_{\vert y}) \ge h(Y'_{\vert y'})$}, and, hence, equality holds.
Now\vspace{-1mm}
\begin{equation}\label{eq:grow}
0 \le h(Y') - h(Y)  =  h (Y'_{\vert y_\dag'} ) - h(Y_{\vert y_\dag}).\vspace{-1mm}
\end{equation}

We next define a  map
\mbox{$\xi: \operatorname{Paths}(X\leftwreath V \rightwreath Y)
\to \operatorname{Paths}(X\leftwreath F \rightwreath Y')$}.
It  suffices to define $\xi$ on $V$ and  on the set of  length-one
\mbox{$(X \leftwreath V \rightwreath Y)$}-paths, and then concatenate paths.

We  define $\xi$~on $V$ by \vspace{-1mm}
$$V = V_0 \cup V_1 \to V_0 \cup V_1{\cdot} y_\dag ^{-1}
 \subseteq F, \qquad v \mapsto \xi(v):=v{\cdot}y_\dag^{-\chi(v)}.\vspace{-1mm}$$

Consider  a length-one \mbox{$(X \leftwreath V \rightwreath Y)$}-path of the form
\mbox{$ v \xrightarrow{{\cdot}y}{\mkern-9mu-} w $}, \mbox{$y \in Y^{\pm 1}$}.
We define \mbox{$\xi(v \xrightarrow{{\cdot}y}{\mkern-9mu-} w)$} to be
 \mbox{$\xi(v)  \overset{{\cdot}\xi(v)^{-1}{\cdot} \xi(w):Y'}{\text{\textbf{-----}}
\cdots\cdot\cdot\mkern-2mu\to}{\mkern-10mu-}\xi(w)$}.
Notice that\vspace{-3mm}
$$y_\dag^{ \hat\chi(\hat\iota_Yy)}{\cdot}y{\cdot} y_\dag^{-\hat\chi (\hat\tau_Yy)} = y'^{\delta(y)} \text{ where }
 \delta(y) := \begin{cases}
1 &\text{if } y \in Y^{\pm 1} - \{y_\dag\}^{\pm1},\\
0 &\text{if }y \in   \{y_\dag\}^{\pm1}.
\end{cases}\vspace{-3mm}$$
As \mbox{$\xi(v) = v{\cdot} y_\dag^{-\chi(v)}$}  and\vspace{-1mm}
$$
\xi(w) = w{\cdot} y_\dag^{-\chi(w)}
= v{\cdot}y{\cdot} y_\dag^{-\chi(w)}
= \xi(v) {\cdot} y_\dag^{\chi(v)}{\cdot}y{\cdot} y_\dag^{-\chi(w)}
= \xi(v) {\cdot}  y_\dag^{\chi(v)- \hat\chi(\hat\iota_Yy)}{\cdot}
y'^{\delta(y)}{\cdot} y_\dag^{\hat\chi (\hat\tau_Yy)-\chi(w)},
$$
 \vspace{-5mm}
\begin{equation}\label{eq:yunexc}
 \xi(v \xrightarrow{ {\cdot}y}{\mkern-9mu-}  w) \text{ equals } \xi(v)
\overset{{\cdot} y_\dag\mkern-3mu'^{\chi(v)-\hat\chi(\hat\iota_Yy)}{\cdot}y'^{\delta(y)} {\cdot}
y_\dag\mkern-3mu'^{\hat\chi(\hat\tau_Yy)- \chi(w)}: Y'}{\text{\textbf{--------}}
\cdots\cdots\cdots\cdots\cdots\cdots\cdot\cdot\mkern-2mu\to}{\mkern-10mu-}
 \xi(w).\phantom{----------}
\end{equation}

Consider now a length-one \mbox{$(X \leftwreath V \rightwreath Y)$}-path of the form
 \mbox{$ v \xrightarrow{x{\cdot}}{\mkern-9mu-} w $}, \mbox{$x \in X^{\pm 1}$}.
Here,  \vspace{-1mm}
$$\xi(v)  =  v {\cdot} y_\dag^{-\chi(v)} \text { and } \xi(w) = w {\cdot} y_\dag^{-\chi(w)}
= x {\cdot} v {\cdot} y_\dag^{-\chi(w)} =
x {\cdot} \xi(v) {\cdot} y_\dag^{\chi(v)-\chi(w)}.
$$

\vspace{-7mm}

\begin{align}\label{eq:xunexc}
\text{We shall define }\xi(v  \xrightarrow{ x {\cdot}}{\mkern-9mu-} w) &\text{ to be }
 \xi(v) \xrightarrow{x{\cdot}}{\mkern-9mu-} x{\cdot}\xi(v)
\overset{{\cdot}y_\dag\mkern-3mu'^{ \chi(v)-\chi(w)} : Y'}{ \textbf{---}
 \cdots \cdots\mkern-2mu\to}{\mkern-12mu-}
\xi(w)
\\& \,\,\text{or} \,\,
 \xi(v )
\overset{{\cdot}y_\dag\mkern-3mu'^{ \chi(v)-\chi(w)} : Y'}{\text{\textbf{---}}
 \cdots \cdots\mkern-2mu\to}{\mkern-12mu-}
 \xi(v){\cdot} y_\dag\mkern-3mu'^{ \chi(v)-\chi(w)}
 \xrightarrow{x {\cdot}}{\mkern-9mu-}
\xi(w).\phantom{-----}\nonumber
\end{align}
Recall that \mbox{$\chi(v) = \chi(w)$} unless
\mbox{$ v  \overset {x {\cdot}}{\rightleftharpoons } w$}
is the disconnecting edge \mbox{$ v_\ast \overset {x_\ast{\cdot}}{\rightleftharpoons }
     x_\ast{\cdot} v_\ast  $}.
If \mbox{$\chi(v) = \chi(w)$}, then \mbox{$\xi(v  \xrightarrow{ x {\cdot}}{\mkern-9mu-} w)$}
equals \mbox{$\xi(v) \xrightarrow{x{\cdot}}{\mkern-9mu-} \xi(w)$}.  Later, we
shall  have enough information to choose between the two options and define
\mbox{$\xi(v_\ast  \xrightarrow{ x_\ast {\cdot}}{\mkern-9mu-} x_\ast{\cdot}v_\ast)$} precisely.

 Now $\xi$ will convert  \mbox{$ (X  \leftwreath V \rightwreath Y )$}-paths   into
 \mbox{$ (X \leftwreath F \rightwreath Y')$}-paths  without changing the left \mbox{$X^{\pm 1}$}-labels, and, hence,
without changing the left $F$-labels.
 Since \mbox{$\xi(1) = 1{\cdot}y_\dag^{-\chi(1)}=1$}, we see that $R$ is now represented by
closed \mbox{$ (X \leftwreath F \rightwreath Y')$}-paths.
We do not claim that the right \mbox{$Y'^{\pm 1}$}-labels are    reduced, but
the image of $R$ in \mbox{$\operatorname{Paths}(X \leftwreath F \rightwreath Y')$}
 does give  an upper bound for~\mbox{$h (Y'_{\vert  y_\dag'} )$}.   On carefully\vspace{-1mm}
considering~\eqref{eq:xunexc}  and~\eqref{eq:yunexc}, and noting   that  the \mbox{$y'^{\delta(y)}$}-terms
contribute no \mbox{$y_\dag'$}-terms, we see that\vspace{-2mm}
$$
 h (Y'_{\vert  y_\dag'} )
\le \widetilde{\,h} ( v_\ast \xrightarrow{ x_\ast{\cdot}}{\mkern-9mu-}  x_\ast{\cdot} v_\ast )
+ \hskip-4pt \textstyle\sum\limits_{y \in Y^{\pm 1}  }
 \widetilde{\,h}  (   V_{1-\hat\chi(\hat\iota_Yy)}  \xrightarrow{{\cdot}y}{\mkern-9mu-} V).\vspace{-3mm}$$
Since \mbox{$   h(Y_{\vert y_\dag}) =   h(Y_{\vert y_\ast})$},  we    see from~\eqref{eq:grow}
that \vspace{-1mm}
\begin{equation}\label{eq:keyineq}
0   \le    h(Y') - h(Y)  \le
\widetilde{\,h}(v_\ast \xrightarrow{ x_\ast{\cdot}}{\mkern-9mu-}  x_\ast{\cdot} v_\ast )
-     h(Y_{\vert y_\ast})
  +  \hskip-2pt   \textstyle\sum\limits_{y \in Y^{\pm 1}}
\widetilde{\,h}  (    V_{1-\hat\chi(\hat\iota_Yy)}  \xrightarrow{{\cdot}y}{\mkern-9mu-}V).
\end{equation}

\vspace{-3.5mm}

We now consider  two cases.

\medskip

\noindent \textbf{Case 1:} \mbox{$\operatorname{d}(X,Y) \le \rank F$}.

Here, we assume that  \mbox{$\vert V{-}\{1\} \vert =   \rank F$} and
 \mbox{$ \bigl\vert(X \leftwreath V \rightwreath Y)_{\ge 3}\bigr\vert  = \operatorname{d}(X,Y)$}.

 Since \mbox{$V \rightwreath Y$} is a tree, we have\vspace{-1.2mm}
\mbox{$\sum\limits_{y \in Y} \vert \hat\iota_Yy\vert = \vert \operatorname{E}(V \rightwreath Y) \vert
= \vert V \vert {-} 1 = \vert Y \vert$}.  For each \mbox{$y \in Y$},
\mbox{$\vert \hat\iota_Yy \vert \ge 1$}; hence,  \mbox{$\vert \hat\iota_Yy \vert = 1$}.
Here in Case 1, for each \mbox{$y \in Y^{\pm 1}$},
we  write \mbox{$\iota_Y y$} to denote the unique element of~\mbox{$\hat \iota_Y y$},
and similarly for \mbox{$  \tau_Y y$}, and analogously with $X$ in place of~$Y$.

 As  an abelian group,
\mbox{$F/ [F,F]$} is freely generated  by the image  of any $F$-basis.  Hence,
there exists a unique\vspace{-1mm} map \mbox{$n_{X,Y} : X \times Y \to \integers$}, \mbox{$(x,y) \mapsto n_{x,y}$},
such that, for each \mbox{$y \in Y$},  $$\textstyle y  {\cdot}  [F,F]  =
\prod_{x \in X}  \bigl((x  {\cdot}
  [F,F])^{n_{x,y}}\bigr) \text{ in }  F / [F,F];$$
 we set \mbox{$X\mkern-6mu\operatorname{-absupp} (y):= \{x \in X \mid n_{x,y} \ne 0\}$}.
By choosing bijections from \mbox{$\{1,2,\ldots,\rank F\}$} to $X$ and to~$Y$,
we may view the map \mbox{$n_{X,Y}$} as an invertible matrix over $\integers$,
and view every bijection
 \mbox{$\phi: X \xrightarrow{\sim} Y$}, \mbox{$x \mapsto \phi x $}, as
a permutation  of \mbox{$\{1,2,\ldots,\rank F\}$}.  Then\nopagebreak \vspace{0mm}
$$\textstyle \sum\limits_{\phi:X \xrightarrow{\sim} Y}
( \operatorname{sign}(\phi){\cdot} \prod_{x \in X} n_{x, \phi x })
 = \operatorname{Det}(n_{X,Y}) \in \{1,-1\}.\vspace{-2.5mm}$$
There thus exists some bijection  \mbox{$\psi:X \xrightarrow{\sim} Y$} such that \vspace{.5mm}
 \mbox{$\prod_{x \in X} n_{x, \psi x }  \ne 0$};  we fix such a $\psi$ throughout Case 1.  Hence,
 \mbox{$x \in X\mkern-6mu\operatorname{-absupp} (\psi x )$}
for each \mbox{$x \in X$}.

Consider any  \mbox{$x_\ast \in X$}, and set \mbox{$y_\ast:= \psi(x_\ast) \in Y$}
and  \mbox{$v_\ast :=  \iota_Xx_\ast \in V$}.  We say that
\mbox{$(v_\ast, x_\ast,y_\ast)$}  is a   \textit{second-stage Case~$1$ triple}.
We have all the  data associated to a first-stage triple.

Let us first show that, for each \mbox{$y \in Y^{\pm1}$}, \mbox{$\hat\chi(\hat \iota_Yy)= \chi(\iota_Y y)$}.
Clearly  \mbox{$Y^{\pm 1}_{2\text{-part}} = \emptyset$}; hence, if
 \mbox{$y \in Y^{\pm1} {-}\{y_\ast\} = Y^{\pm1}_{1\text{-part}}$},
then \mbox{$\hat\chi( \hat \iota_Y y)   = \chi( \iota_y y)$}, as desired.
It remains to consider \mbox{$y_\ast$}.
Now\vspace{-1mm}
  $$\hat\chi( \hat \iota_Y y_\ast ) = 1- \hat\chi( \hat\iota_Y y_\ast^{-1} ) = 1- \chi( \iota_Y y_\ast^{-1})
= 1 - \chi( \tau_Y y_\ast),\vspace{-1mm}$$
and it suffices to show that \mbox{$\chi( \tau_Y y_\ast) \ne \chi(\iota_Y y_\ast)$}.
 Let \mbox{$(x_\ell, \ldots, x_2,x_1 )$}, \mbox{$\ell \in \naturals$}, be the reduced \mbox{$X^{\pm 1}$}-sequence for
\mbox{$( \tau_Y y_\ast){\cdot}( \iota_Y y_\ast)^{-1}$}.
Then\vspace{-1mm}
 $$ \iota_Y y_\ast {\cdot} y_\ast  {\cdot}( \iota_Y y_\ast)^{-1} =
 ( \tau_Y y_\ast){\cdot}( \iota_Y y_\ast)^{-1} =
 x_\ell \cdots x_2 x_1 .$$
Hence, \mbox{$ y_\ast{\cdot} [F,F]
 = \textstyle \prod_{k=1}^\ell (x_k {\cdot}  [F,F])$}.\vspace{1mm}
Since  \mbox{$x_\ast \in X\mkern-6mu\operatorname{-absupp} (\psi(x_\ast))$}
and \mbox{$ \psi(x_\ast)  =  y_\ast $},
there exists some  \mbox{$k \in \{1,2,\ldots,\ell\}$}
such that \mbox{$\{x_k\}^{\pm 1} = \{x_\ast\}^{\pm1}$}.
The reduced  \mbox{$(X \leftwreath F)$}-path
$$   \iota_Y y_\ast = v_0 \xrightarrow{x_1{\cdot}}{\mkern-9mu-} v_1 \xrightarrow{x_2{\cdot}}{\mkern-9mu-} \cdots
\xrightarrow{x_{\ell-1}{\cdot}}{\mkern-9mu-} v_{\ell-1}\xrightarrow{x_\ell{\cdot}}{\mkern-9mu-} v_\ell = x_\ell \cdots x_1 {\cdot}   \iota_Y y_\ast
 =   \tau_Y y_\ast $$
is  the unique reduced \mbox{$(X \leftwreath V)$}-path
from \mbox{$  \iota_Y y_\ast$} to \mbox{$ \tau_Y y_\ast $}, and it traverses
\mbox{$ v_{k-1}\overset {x_k{\cdot}}{\rightleftharpoons }     v_k  $},
which is
\mbox{$ v_\ast \overset {x_\ast{\cdot}}{\rightleftharpoons }     x_\ast{\cdot} v_\ast,$}
which is the disconnecting edge.
Hence,   \mbox{$\chi(  \iota_Yy_\ast) \ne \chi( \tau_Y y_\ast)$}, as desired.

Since \mbox{$y_\ast = \psi(x_\ast)$}, here in Case 1,~\eqref{eq:keyineq} takes the form \vspace{-1mm}
\begin{equation}\label{eq:Ps}
0  \le   h(Y') {-}  h(Y )
  \le     h(X_{\vert x_\ast}) { -}   h(Y_{\vert \psi(x_\ast)} ).\vspace{-1mm}
\end{equation}
Since $x_\ast$ is arbitrary,
\mbox{$0  \le   h(X_{\vert x}) { -}   h(Y_{\vert \psi x } )$}
for each \mbox{$x \in X$}.
Thus,\vspace{-1mm}  $$\textstyle
0 \le
 \sum\limits_{x \in X} \bigl(  h(X_{\vert x}) {-}   h(Y_{\vert \psi x } ) \bigr)
=
h(X)-h(Y). \vspace{-2mm}$$
By the interchangeability of $X$ and $Y$, we  then have
 \mbox{$h(X)- h(Y) = 0$}.  It  follows in turn that
\mbox{$ h(X_{\vert x }) { -}   h(Y_{\vert \psi x } ) = 0$}
 for each \mbox{$x \in X$}.
By~\eqref{eq:Ps}, \mbox{$h(Y') = h(Y)$}, as desired.

Consider the subcase where,  for each \mbox{$y \in Y^{\pm 1}$} such that \mbox{$ \iota_Y y = 1$},
the element \mbox{$ \tau_Y y $} of $V$ has
\mbox{$(X\leftwreath V \rightwreath Y)$}-valence
exactly two, and therefore   \mbox{$(X \leftwreath V)$}-valence exactly
one and \mbox{$(V \rightwreath Y)$}-valence exactly one.
The latter means that \mbox{$V^{\pm 1}{-}\{1\} = Y^{\pm 1}$}, and the former then means that
\mbox{$ V^{\pm 1}{-}\{1\} = X^{\pm 1}$}.  We then have
\mbox{$X^{\pm 1} = Y^{\pm 1}$}, which is one of the desired conclusions; here, \mbox{$\operatorname{d}(X,Y)=0$}.
It remains to consider the subcase where there exists some \mbox{$y_\dag \in
Y^{\pm 1}$} such that \mbox{$ \iota_Y y_\dag = 1$} and
 \mbox{$ \tau_Y y_\dag  \in (X \leftwreath V \rightwreath Y)_{\ge 3}$}.
 We fix such a~\mbox{$y_\dag$}, and take \mbox{$y_\ast \in Y \cap \{y_\dag\}^{\pm 1}$},
  \mbox{$x_\ast := \psi^{-1}(y_\ast)$}, and \mbox{$v_\ast :=  \iota_Xx_\ast$}.
We  say that \mbox{$(v_\ast, x_\ast, y_\ast)$} is a     \textit{third-stage Case~$1$ triple}.

By \eqref{eq:yunexc}, for each
 \mbox{$y \in Y^{\pm1}-\{y_\dag\}^{\pm 1}$},  we have
\mbox{$\xi ( \iota_Yy \xrightarrow{ {\cdot}y} {\mkern-9mu-}  \tau_Yy )$}
 equals   \mbox{$\xi( \iota_Yy)
 \xrightarrow{ {\cdot}y'}{\mkern-9mu-}\xi( \tau_Yy)$},
while
 \mbox{$\xi(\iota_Y y_\dag  \xrightarrow{ {\cdot}y_\dag} {\mkern-9mu-}  \tau_Y y_\dag)$}   equals
   \mbox{$\xi( \iota_Y y_\dag) \overset{{\cdot}1:Y'}{\text{\textbf{--}}\cdots\mkern-2mu\to}{\mkern-10mu-}\xi( \tau_Y y_\dag)$}.

Set   \mbox{$x_\dag:= x_\ast^{(-1)^{\chi(v_\ast)}}$};
thus,
\mbox{$  \iota_X x_\dag \overset {x_\dag{\cdot}}{\rightleftharpoons }      \tau_X x_\dag$}    equals
 \mbox{$v_\ast \overset {x_\ast{\cdot}}{\rightleftharpoons }      x_\ast{\cdot} v_\ast$},
\mbox{$\chi(\iota_X x_\dag) = 0$},  and  \mbox{$\chi(\tau_X x_\dag) = 1.$}
In~\eqref{eq:xunexc},   for  each  \mbox{$x \in X^{\pm1}-\{x_\dag\}^{\pm 1}$},
\mbox{$\xi ( \iota_Xx \xrightarrow{x{\cdot}}{\mkern-9mu-}    \tau_Xx  )$}
equals \vspace{-.5mm} \mbox{$\xi( \iota_Xx)  \xrightarrow{x{\cdot}}{\mkern-9mu-}\xi( \tau_Xx)$},
while we now choose \mbox{$
\xi( \iota_X x_\dag  \xrightarrow{ x_\dag{\cdot}}{\mkern-9mu-}   \tau_X x_\dag)$}   to be equal to
 \mbox{$\xi( \iota_X x_\dag) \xrightarrow{x_\dag {\cdot}}{\mkern-9mu-}  \tau_X x_\dag \xrightarrow{ {\cdot}y_\dag'^{-1}  }{\mkern-9mu-}
\xi( \tau_X x_\dag)$}.

Set \mbox{$V':= \xi(V)  \cup \hat \tau_X  x_\dag  = V_0 \cup V_1{\cdot} y_\dag^{-1} \cup
    \hat \tau_X  x_\dag    \subseteq F$}.
We shall see that  \mbox{$V'$} is an \mbox{$(X, Y')$}-trans\-lator.
Since \mbox{$\hat\tau_Xx_\dag \subseteq V_1$}, we see that \mbox{$V'$} is a
finite, $1$-containing, $F$-generating set.  Thus, \mbox{$\vert V' \vert \ge    \vert V\vert$}.
Since \mbox{$\hat \iota_Y y_\dag  \subseteq  V_0 \cap V_1{\cdot} y_\dag^{-1}$},
we see that \vspace{.5mm} \mbox{$\vert V' \vert = \vert V\vert$} and
\mbox{$ \tau_X  x_\dag  \not \in V_0 \cup V_1{\cdot} y_\dag^{-1}$}.

It is clear that \mbox{$\xi\bigl(\operatorname{Paths}(X\leftwreath V \rightwreath Y)\bigr) \subseteq
\operatorname{Paths}(X\leftwreath V' \rightwreath Y')$}. \hskip-.8pt
Let us examine the graphs   \mbox{$ X \leftwreath V' \rightwreath Y' $},
\mbox{$ X \leftwreath V'  $}, and \mbox{$V' \rightwreath Y'$}.
From the form that $\xi$ takes here,\vspace{-1mm} we see that
 \mbox{$X \leftwreath V' \rightwreath Y' $} is obtained from
\mbox{$X \leftwreath V \rightwreath Y$} by  first
subdividing the edge \mbox{$ \iota_X x_\dag \overset {x_\dag{\cdot}}{\rightleftharpoons }      \tau_X x_\dag$},
and secondly \vspace{-1mm}  collapsing the edge
\mbox{$\iota_Y y_\dag  \overset {{\cdot} y_\dag}{\rightleftharpoons }  \tau_Y y_\dag $}.
The graph \mbox{$X \leftwreath V'$} is thus obtained from the tree
 \mbox{$X  \leftwreath V $} by first removing an edge,
leaving two components with vertex-sets $V_0$ and~$V_1$,
secondly  identifying  one vertex of $V_0$ with one vertex of $V_1$,
and thirdly attaching one new vertex and one new edge
incident to the new vertex and an old vertex.
Hence, \mbox{$X \leftwreath V'$} is  a tree.
The graph \mbox{$ V' \rightwreath Y' $} is obtained
from the tree \mbox{$ V \rightwreath Y  $} by first collapsing one edge identifying its vertices,
and secondly  attaching one new vertex and one new edge
incident to the new vertex and an old vertex.  Thus, \mbox{$ V' \rightwreath Y' $} is
 a tree.
Hence, \mbox{$V'$} is an \mbox{$(X, Y')$}-translator.

Finally,
 \mbox{$\bigl \vert (X \leftwreath V \rightwreath Y)_{\ge 3}\bigr\vert > \bigl\vert (X \leftwreath
V' \rightwreath Y')_{\ge 3}\bigr\vert$},
since the newly created vertex has \mbox{$(X \leftwreath V'\rightwreath Y')$}-va\-lence two,
while the two old vertices which become identified
are \mbox{$\tau_Yy_\dag \in (X \leftwreath V \rightwreath Y)_{\ge 3}$} and \mbox{$\iota_Yy_\dag = 1$}.
Hence, \mbox{$\operatorname{d}(X,Y) > \operatorname{d}(X,Y')$}.

\medskip

\noindent \textbf{Case 2:} \mbox{$ \operatorname{d}(X,Y) >  \rank F$}.

Here, we assume that \mbox{$  \vert V{-}\{1\} \vert =  \operatorname{d}(X,Y)$}.  Hence,
\mbox{$  \vert V{-}\{1\} \vert   >  \rank F = \vert Y \vert$}.

Since \mbox{$V \rightwreath Y$} is a tree,\vspace{-1.5mm}
\mbox{$\sum\limits_{y \in Y} \vert \hat\iota_Yy\vert = \vert \operatorname{E}(V \rightwreath Y) \vert
= \vert V \vert {-} 1 > \vert Y \vert$}.
There then exists some \mbox{$y_\ast \in Y^{\pm 1}$} such that \mbox{$\vert \hat \iota_Y y_\ast \vert \ge 2$}.
The tree \mbox{$X \leftwreath (\hat \iota_Y y_\ast)$} must then contain
 some edge \mbox{$ v_\ast \overset {x_\ast{\cdot}}{\rightleftharpoons } x_\ast{\cdot}v_\ast $},
and then the tree
\mbox{$  X \leftwreath (\hat \tau_Y y_\ast) = \bigl(X \leftwreath (\hat \iota_Y y_\ast)\bigr){\cdot}y_\ast$}
contains the edge
 \mbox{$ v_\ast{\cdot}y_\ast  \overset {x_\ast{\cdot}}{\rightleftharpoons } x_\ast{\cdot}v_\ast{\cdot}y_\ast $},
  giving a diagram \vspace{-4mm}
$$\begin{CD}
v_\ast{\cdot} y_\ast  @>{x_\ast {\cdot} }>>{\mkern-15mu-} x_\ast {\cdot}v_\ast {\cdot}y_\ast \\
@A{ {\cdot}y_\ast }AA @AA{{ \cdot}y_\ast}A\\
v_\ast @>{ x_\ast {\cdot} }>>{\mkern-30mu-} x_\ast{\cdot}v_\ast
\end{CD}\vspace{1mm}$$
 of length-one \mbox{$(X \leftwreath V \rightwreath Y)$}-paths.
We say that \mbox{$(v_\ast, x_\ast,y_\ast)$} is  a
\textit{second-stage Case~$2$ triple}.
We now have all the   data associated with a first-stage triple.

 If  \mbox{$y_\ast^{-1} \in Y_{1\text{-part}}$}, then
\mbox{$\hat\chi(\hat\iota y_{\ast}^{-1}) = \chi(v_\ast {\cdot}  y_\ast)$}, because
 \mbox{$ v_\ast {\cdot}  y_\ast \in  \hat\iota y_{\ast}^{-1} $}. If
\mbox{$y_\ast^{-1} \in Y^{\pm1}_{2\text{-part}}$}, then, by definition,
 \mbox{$\hat\chi(\hat\iota y_{\ast}^{-1}) = \chi_F(v_\ast {\cdot}  y_\ast)= \chi(v_\ast {\cdot}  y_\ast)$}.
This proves that  \mbox{$\hat\chi(\hat\iota y_{\ast}^{-1}) = \chi(v_\ast {\cdot}  y_\ast)$};
hence, \mbox{$\hat\chi(\hat\iota y_{\ast} ) = 1- \chi(v_\ast {\cdot}  y_\ast)$}.

Let \mbox{$\operatorname{west}_{(v_\ast, x_\ast )} (\hat\iota_Y y_\ast )$}
and \mbox{$\operatorname{east}_{(v_\ast, x_\ast )} (\hat\iota_Y y_\ast )$}
  denote the vertex-sets of  the components of
\mbox{$\bigl(X \leftwreath  (\hat \iota_Y y_\ast )\bigr)
\hskip-1pt -\hskip-1pt \{ v_\ast \overset {x_\ast{\cdot}}{\rightleftharpoons } x_\ast{\cdot}v_\ast  \}$}
which contain  \mbox{$v_\ast$} and \mbox{$x_\ast{\cdot} v_\ast$} respectively.
Let \mbox{$\operatorname{proper} (v_\ast, x_\ast, y_\ast)$}
 denote  the intersection of \mbox{$\hat \iota_Y y_\ast$} with the component of
\mbox{$\bigl(X \leftwreath  F\bigr)
\hskip-1pt -\hskip-1pt \{ v_\ast \overset {x_\ast{\cdot}}{\rightleftharpoons } x_\ast{\cdot}v_\ast  \}$}
which does \textit{not} contain \mbox{$v_\ast {\cdot} y_\ast$} and, hence,   intersects $V$ in
\mbox{$V_{1- \chi(v_\ast{\cdot}y_\ast)}$}. Since
 \mbox{$\hat\chi(\hat\iota y_{\ast} ) = 1- \chi(v_\ast {\cdot}  y_\ast)$},
  \mbox{$\operatorname{proper} (v_\ast, x_\ast, y_\ast)
= \hat \iota_Y y_\ast \cap  V_{\hat\chi(\hat\iota_Y y_\ast)} \in \{
\operatorname{west}_{(v_\ast, x_\ast )} (\hat\iota_Y y_\ast ),
 \operatorname{east}_{(v_\ast, x_\ast )} (\hat\iota_Y y_\ast )\}$}.

Let \mbox{$\operatorname{south}_{(v_\ast, y_\ast )} (\hat\iota_X x_\ast )$} and\vspace{-.5mm}
 \mbox{$\operatorname{north}_{(v_\ast, y_\ast )} (\hat\iota_X x_\ast ) $} denote the
vertex-sets of   the components of
\mbox{$\bigl((\hat\iota_X x_\ast )\rightwreath  Y\bigr)
\hskip-1pt -\hskip-1pt \{ v_\ast \overset {{\cdot}y_\ast}{\rightleftharpoons } v_\ast{\cdot}y_\ast  \}$}
which contain  \mbox{$v_\ast$} and \mbox{$ v_\ast{\cdot}y_\ast$} respectively. It is not difficult to show that
 \mbox{$\operatorname{south}_{(v_\ast, y_\ast )} (\hat\iota_X x_\ast ) = \{v_\ast\}$}
if and only if \mbox{$v_\ast$}
has \mbox{$(X\leftwreath V)$}-valence one,
if and only if \mbox{$Y^{\pm 1}_{2\text{-part}} =\emptyset$}.

We now consider an arbitrary \mbox{$y \in Y^{\pm1}_{2\text{-part}}$}.
Thus, \mbox{$\hat\chi(\hat \iota_Y y) = \chi_F(v_\ast{\cdot}y^{-1})$}.
  We
have a diagram
$$\begin{CD}
v_\ast{\cdot} y_\ast  @>{x_\ast {\cdot} }>>{\mkern-12mu-} x_\ast {\cdot}v_\ast {\cdot}y_\ast \\
@A{ {\cdot}y_\ast }AA @AA{{ \cdot}y_\ast}A\\
v_\ast @>{ x_\ast {\cdot} }>>{\mkern-35mu-} x_\ast{\cdot}v_\ast\\
@V{ {\cdot}y  }VV  @VV{{ \cdot}y }V\\
v_\ast{ \cdot}y   @>{ x_\ast {\cdot} }>>{\mkern-25mu-} x_\ast{\cdot}v_\ast{ \cdot}y
\end{CD}$$
 of length-one \mbox{$(X \leftwreath V \rightwreath Y)$}-paths.
Notice that \mbox{$(v_\ast {\cdot} y ,x_\ast, y ^{-1})$} is a   second-stage Case~2 triple, and that
 \mbox{$ \operatorname{proper} (v_\ast{\cdot} y  , x_\ast  , y^{-1})$}
 is then the intersection of \mbox{$\hat \iota_Y y^{-1}$} with that component of
\mbox{$\bigl(X \leftwreath  F\bigr)
 -\{  v_\ast {\cdot} y  \overset {x_\ast{\cdot}}{\rightleftharpoons } x_\ast{\cdot}v_\ast{\cdot} y  \}$}
which does \textit{not} contain \mbox{$v_\ast$}.  On right multiplying by~\mbox{$y^{-1}$}, we see that
 \mbox{$\bigl( \operatorname{proper} (v_\ast{\cdot} y , x_\ast , y^{-1})\bigr){\cdot}y ^{-1}$}
 is the intersection of \mbox{$\hat \iota_Y y$} with that component of
\mbox{$\bigl(X \leftwreath  F\bigr)
 -\{  v_\ast   \overset {x_\ast{\cdot}}{\rightleftharpoons } x_\ast{\cdot}v_\ast  \}$}
which does \textit{not} contain \mbox{$v_\ast{\cdot}y^{-1}$} and, hence,  intersects $V$ in
\mbox{$V_{1- \chi_F(v_\ast{\cdot}y ^{-1})}$}.  Since
\mbox{$ \hat\chi(\hat \iota_Y y)=\chi_F(v_\ast{\cdot}y ^{-1})$},
   \mbox{$\bigl (\operatorname{proper} (v_\ast{\cdot} y ,  x_\ast , y^{-1})\bigr){\cdot}y ^{-1}
= \hat \iota_Y y  \cap V_{1- \hat\chi(\hat \iota_Y y)}$}.

Now
\begin{align*}
&\textstyle\sum\limits_{y \in Y^{\pm 1}}
\widetilde{\,h}  (V_{1-\hat\chi(\hat\iota_Yy)}  \xrightarrow{{\cdot}y}{\mkern-9mu-}V) \\&\textstyle=
\widetilde{\,h}  (V_{1-\hat\chi(\hat\iota_Yy_\ast)}  \xrightarrow{{\cdot}y_\ast}{\mkern-9mu-}V)
+ \sum\limits_{y \in Y^{\pm 1}_{2\text{-part}}} \widetilde{\,h}  (V_{1-\hat\chi(\hat\iota_Yy  )}  \xrightarrow{{\cdot}y  }{\mkern-9mu-}V)
\\&\textstyle=   h(Y_{\vert y_\ast}) -
\widetilde{\,h}  (V_{ \hat\chi(\hat\iota_Yy_\ast)}  \xrightarrow{{\cdot}y_\ast}{\mkern-9mu-}V)
+ \sum\limits_{y \in Y^{\pm1}_{2\text{-part}}}
\widetilde{\,h} \Bigl(\bigl(\operatorname{proper} (v_\ast{\cdot}y , x_\ast, y^{-1})\bigr){\cdot}y^{-1}
\xrightarrow{{\cdot}y }{\mkern-9mu-} V\Bigr)
\\&\textstyle=  h(Y_{\vert y_\ast}) -
\widetilde{\,h} \bigl(\operatorname{proper} (v_\ast, x_\ast  , y_\ast)
\xrightarrow{{\cdot}y_\ast}{\mkern-9mu-} V\bigr)
+ \hskip-9pt\sum\limits_{y \in Y^{\pm 1}_{2\text{-part}}}  \widetilde{\,h}\bigl
(\operatorname{proper} (v_\ast{\cdot}y , x_\ast , y^{-1})
\xrightarrow{{\cdot}y ^{-1}}{\mkern-9mu-} V\bigr).
\end{align*}
Thus, here in Case~2,~\eqref{eq:keyineq} takes the form
\begin{align} \label{eq:trap}
 \textstyle 0 \le  h (Y'){-}  h(Y )
\textstyle\le\widetilde{\,h}( v_\ast \xrightarrow{ x_\ast{\cdot}}{\mkern-9mu-}  x_\ast{\cdot} v_\ast )
 & {-}\,
\widetilde{\,h} \bigl(\operatorname{proper} (v_\ast, x_\ast, y_\ast)
\xrightarrow{{\cdot}y_\ast}{\mkern-9mu-} V\bigr)
 \\&\textstyle+ \hskip-10pt\sum\limits_{y \in Y^{\pm 1}_{2\text{-part}}} \widetilde{\,h}\bigl
 (\operatorname{proper} (v_\ast{\cdot}y , x_\ast , y^{-1})
\xrightarrow{{\cdot}y^{-1} } {\mkern-9mu-}V\bigr).    \nonumber
\end{align}

\null\vspace{-7mm}

\noindent In particular,  \vspace{-1mm}
$$
\textstyle\widetilde{\,h} \bigl( \operatorname{proper} (v_\ast, x_\ast, y_\ast)
\xrightarrow{{\cdot}y_\ast}{\mkern-9mu-} V\bigr) \le
\widetilde{\,h}(v_\ast \xrightarrow{ x_\ast{\cdot}} {\mkern-9mu-}  x_\ast{\cdot} v_\ast ) + \hskip-10pt \textstyle\sum\limits_{y \in Y^{\pm 1}_{2\text{-part}}}
\widetilde{\,h}\bigl (\operatorname{proper} (v_\ast{\cdot}y, x_\ast, y^{-1})
\xrightarrow{{\cdot}y^{-1} } {\mkern-9mu-}V\bigr). \vspace{-3mm}
$$
Since $$  \textstyle\widetilde{\,h} \bigl( \operatorname{south}_{(v_\ast, y_\ast )} (\hat\iota_X x_\ast )
\xrightarrow{x_\ast{\cdot}}{\mkern-9mu-} V\bigr)  =
 \widetilde{\,h}(v_\ast \xrightarrow{ x_\ast{\cdot}}{\mkern-9mu-}  x_\ast{\cdot} v_\ast )
+ \hskip-10pt\sum\limits_{y \in Y^{\pm 1}_{2\text{-part}}}
\widetilde{\,h} \bigl( \operatorname{south}_{(v_\ast{\cdot}y, y^{-1})}
(\hat\iota_X x_\ast  )\xrightarrow{x_\ast{\cdot} } {\mkern-9mu-}V\bigr),\vspace{-1.5mm}$$
it may  be seen by induction on  \mbox{$\vert  \operatorname{south}_{(v_\ast, y_\ast )}
 (\hat\iota_X x_\ast ) \vert$} that
$$ \widetilde{\,h} \bigl( \operatorname{proper} (v_\ast, x_\ast  , y_\ast)
\xrightarrow{{\cdot}y_\ast}{\mkern-9mu-}V\bigr) \le
\widetilde{\,h} \bigl( \operatorname{south}_{(v_\ast, y_\ast )} (\hat\iota_X x_\ast )
\xrightarrow{x_\ast{\cdot}} {\mkern-9mu-}V\bigr).\vspace{-2mm} $$
Let us  write
$$\widetilde{\,h}\operatorname{-west}:=
\widetilde{\,h} \bigl( \operatorname{west}_{(v_\ast, x_\ast )} (\hat\iota_Y y_\ast )
\xrightarrow{{\cdot}y_\ast}{\mkern-9mu-}V\bigr) \text{ and }
 \widetilde{\,h}\operatorname{-south}:=\widetilde{\,h} \bigl( \operatorname{south}_{(v_\ast, y_\ast )} (\hat\iota_X x_\ast )
\xrightarrow{x_\ast{\cdot}}{\mkern-9mu-} V\bigr),$$ and similarly for \mbox{$\widetilde{\,h}\operatorname{-east}$}
and \mbox{$\widetilde{\,h}\operatorname{-north}$}.
We have shown that
 $$\min \{  \widetilde{\,h} \operatorname{-west}, \widetilde{\,h}\operatorname{-east}\}
 \le \widetilde{\,h} \bigl(\operatorname{proper}( v_\ast, x_\ast, y_\ast)
\xrightarrow{{\cdot}y_\ast}{\mkern-9mu-} V\bigr)
 \le \widetilde{\,h}\operatorname{-south}.$$
Replacing  \mbox{$(v_\ast, x_\ast, y_\ast)$}
 with the  second-stage Case 2 triple \mbox{$(v_\ast{\cdot}y_\ast, x_\ast, y_\ast^{-1})$}
 interchanges south and north, and we find that
 \mbox{$\min \{ \widetilde{\,h}\operatorname{-west}, \widetilde{\,h}\operatorname{-east}\}
\le \widetilde{\,h}\operatorname{-north}$}.
Hence,\vspace{-1mm} $$\min\{\widetilde{\,h}\operatorname{-west}, \widetilde{\,h}\operatorname{-east}\}
\le \min\{\widetilde{\,h}\operatorname{-south},\widetilde{\,h}\operatorname{-north}\}.\vspace{-1mm}$$
Interchanging $X$ and~$Y$ interchanges south and west, as well as north and east, and we find\vspace{-1mm}
\begin{equation}\label{eq:triple}
  \min\{\widetilde{\,h}\operatorname{-south},\widetilde{\,h}\operatorname{-north}\}\le
 \min\{\widetilde{\,h}\operatorname{-west}, \widetilde{\,h}\operatorname{-east}\}
 \le \widetilde{\,h} \bigl(\operatorname{proper}( v_\ast, x_\ast, y_\ast)
\xrightarrow{{\cdot}y_\ast}{\mkern-9mu-} V\bigr).\vspace{-1mm}
\end{equation}

We now choose a    third-stage Case $2$ triple  as follows.
Consider the preceding~\mbox{$x_\ast$}.  Thus,  \mbox{$(\hat\iota_X x_\ast) \rightwreath Y$} is a finite tree
that has at least one edge  and, hence,   at least two
valence-one vertices.
There then exists a valence-one
\mbox{$\bigl((\hat\iota_X x_\ast) \rightwreath Y\bigr)$}-vertex~\mbox{$v_\ast$} such that
\mbox{$\widetilde{\,h}( v_\ast \xrightarrow{x_\ast{\cdot}}{\mkern-9mu-}
 x_\ast{\cdot} v_\ast) \le     h (X_{\vert x_\ast})/2$}.
Taking \mbox{$ v_\ast \overset {x_\ast{\cdot}}{\rightleftharpoons }     x_\ast{\cdot} v_\ast  $}
as the disconnecting edge  determines
a map \mbox{$\chi:V \to \{0,1\}$}.
If  {$\chi(v_\ast) = 0$},
we fix this $x_\ast$ and this $v_\ast$.  If   \mbox{{$\chi(v_\ast) = 1$}, we replace $(x_\ast, v_\ast)$}
with  \mbox{$(x_\ast^{-1}, x_\ast{\cdot} v_\ast)$}, and then fix\vspace{-.5mm} this new $x_\ast$ and   $v_\ast$; then
\mbox{$\chi(v_\ast) = 0$}.  Now \mbox{$\chi(x_\ast{\cdot}v_\ast) = 1$}.
Let \mbox{$y_\ast$} denote the element of \mbox{$Y^{\pm 1}$}
such that  \mbox{$  v_\ast   \overset {{\cdot}y_\ast}{\rightleftharpoons }  v_\ast{\cdot} y_\ast    $}
 is the unique edge of \mbox{$(\hat\iota_X x_\ast) \rightwreath Y$} that is incident to~\mbox{$v_\ast$}.
Now \mbox{$(v_\ast, x_\ast, y_\ast)$} is a   second-stage Case~2 triple,
\mbox{$Y^{\pm 1}_{2\text{-part}} = \emptyset$},
\mbox{$ \widetilde{\,h}( v_\ast \xrightarrow{x_\ast{\cdot}}{\mkern-9mu-}
 x_\ast{\cdot} v_\ast) \le      h (X_{\vert x_\ast})/2$},
\mbox{$\chi(v_\ast) = 0$}, and \mbox{$\chi(x_\ast{\cdot}v_\ast) = 1$};
we say that \mbox{$(v_\ast, x_\ast, y_\ast)$} is a  \textit{third-stage Case $2$ triple}.
Since \mbox{$ Y^{\pm 1}_{2\text{-part}} = \emptyset$},\vspace{-4mm}
$$\widetilde{\,h}\operatorname{-south}  =
\widetilde{\,h}( v_\ast \xrightarrow{x_\ast{\cdot}}{\mkern-9mu-}
 x_\ast{\cdot} v_\ast) \le   h (X_{\vert x_\ast}) - \widetilde{\,h}( v_\ast \xrightarrow{x_\ast{\cdot}}{\mkern-9mu-}
 x_\ast{\cdot} v_\ast) = \widetilde{\,h}\operatorname{-north};\vspace{-1.5mm}$$
thus, \mbox{$\widetilde{\,h}( v_\ast \xrightarrow{x_\ast{\cdot}}{\mkern-9mu-}
 x_\ast{\cdot} v_\ast)  = \min\{ \widetilde{\,h}\operatorname{-south}, \widetilde{\,h}\operatorname{-north}\}. $}
Also, since \mbox{$ Y^{\pm 1}_{2\text{-part}} = \emptyset$}, it follows from~\eqref{eq:trap} and~\eqref{eq:triple}
that \mbox{$h(Y') = h(Y)$}, as desired.

Set \mbox{$V':= \xi(V) = V_0 \cup V_1{\cdot}y_\dag^{-1}$}.  It suffices to show that $V'$ is an \mbox{$(X,Y')$}-translator
with \mbox{$\vert V' \vert < \vert V \vert$}.

Since \mbox{$ Y^{\pm 1}_{2\text{-part}} = \emptyset$},  \eqref{eq:yunexc} says that
\mbox{$\xi (  v \xrightarrow{ {\cdot}y} {\mkern-9mu-}  v{\cdot} y )$}   equals   \mbox{$\xi( v)
 \xrightarrow{ {\cdot}y'}{\mkern-9mu-}\xi( v{\cdot} y)$} if
   \mbox{$y \in Y^{\pm1}-\{y_\ast\}^{\pm 1}$} and \mbox{$v \in \hat\iota_Yy,$}
and that\vspace{-4mm}
$$\xi( v   \xrightarrow{ {\cdot}y_\ast} {\mkern-9mu-}  v{\cdot} y_\ast)\,\,\,=\,\,\,
\begin{cases}
 \xi(v)  \overset{{\cdot}1:Y'}{\text{\textbf{--}}\cdots\mkern-2mu\to}{\mkern-10mu-} \xi(  v{\cdot}  y_\ast)
&\text{if }v \in  V_{ \hat\chi(\hat\iota_Y y_\ast)} \cap \hat\iota_Yy_\ast, \\
 \xi(v) \xrightarrow{{\cdot}y_\ast'}{\mkern-9mu-} \xi(v{\cdot}  y_\ast  )
&\text{if }v \in   V_{1- \hat\chi(\hat\iota_Yy_\ast)}  \cap \hat\iota_Y y_\ast.
\end{cases}  $$

By \eqref{eq:xunexc},
\mbox{$\xi ( v \xrightarrow{x{\cdot}} {\mkern-9mu-}   x{\cdot}v )$}  equals  \mbox{$ \xi( v)
 \xrightarrow{x{\cdot}}{\mkern-9mu-}\xi( x{\cdot}v)$}   if   \mbox{$x \in X^{\pm1},
 v \in \hat\iota_Xx,$} and
  \mbox{$  v \overset {x {\cdot}}{\rightleftharpoons }     x {\cdot} v $}    is not equal to
  \mbox{$v_\ast \overset {x_\ast{\cdot}}{\rightleftharpoons }   x_\ast{\cdot} v_\ast .$}
 It remains to  specify  \mbox{$\xi(v_\ast \xrightarrow{x_\ast{\cdot}}{\mkern-9mu-} x_\ast{\cdot}v_\ast)$}.
Clearly,  \mbox{$   v_\ast{\cdot}y_\ast  \overset {x_\ast{\cdot}}{\rightleftharpoons }
   x_\ast{\cdot}v_\ast{\cdot}y_\ast$}   is not equal to
\mbox{$v_\ast \overset {x_\ast{\cdot}}{\rightleftharpoons }      x_\ast{\cdot} v_\ast$};
 hence, \mbox{$ \chi(x_\ast{\cdot}v_\ast{\cdot}y_\ast)= \chi(v_\ast{\cdot}y_\ast) =
 1 - \hat\chi(\hat\iota_Yy_\ast)\in \{0,1\}$}.
We then have two subcases.

If \mbox{$\chi(v_\ast{\cdot}y_\ast)   = \chi(x_\ast{\cdot}v_\ast{\cdot}y_\ast)=1 - \hat\chi(\hat\iota_Yy_\ast) = 1$},
then \mbox{$y_\dag = y_\ast$}  and
\begin{align*}
&\xi(v_\ast{\cdot}y_\ast) = v_\ast{\cdot}y_\ast{\cdot}y_\dag^{-1} = v_\ast,
&&\xi(x_\ast{\cdot}v_\ast{\cdot}y_\ast) = x_\ast{\cdot}v_\ast{\cdot}y_\ast
{\cdot} y_\dag^{-1} =  x_\ast{\cdot}v_\ast,\\
&\xi(v_\ast) = v_\ast,
&&\xi(x_\ast{\cdot}v_\ast) = x_\ast{\cdot}v_\ast{\cdot}y_\dag^{-1} =x_\ast{\cdot}v_\ast{\cdot}y_\ast^{-1}.
\end{align*}
\vspace{-5mm}

\noindent
 Here, \mbox{$\xi:V \to  V'$}  is not injective, and we define \mbox{$\xi(v_\ast \xrightarrow{x_\ast{\cdot}} {\mkern-9mu-}x_\ast{\cdot}v_\ast)$}
  to be\vspace{-2mm}
$$\xi(v_\ast) \xrightarrow{x_\ast{\cdot}}{\mkern-9mu-} \xi(x_\ast{\cdot}v_\ast{\cdot}y_\ast)
\xrightarrow {{\cdot}y_\ast'^{-1}} {\mkern-9mu-}\xi(x_\ast {\cdot}v_\ast ) \vspace{-2mm}$$
in  \mbox{$\operatorname{Paths}(X \leftwreath V' \rightwreath Y')$}; notice that \mbox{$  \xi(x_\ast{\cdot}v_\ast{\cdot}y_\ast)
\xrightarrow {{\cdot}y_\ast'^{-1}}{\mkern-9mu-} \xi(x_\ast {\cdot}v_\ast )$}  equals \mbox{$\xi\bigl(   x_\ast{\cdot}v_\ast{\cdot}y_\ast
\xrightarrow {{\cdot}y_\ast^{-1}} {\mkern-9mu-} x_\ast {\cdot}v_\ast   \bigr)$}.

If \mbox{$\chi(v_\ast{\cdot}y_\ast) = \chi(x_\ast{\cdot}v_\ast{\cdot}y_\ast) =1 - \hat\chi(\hat\iota_Yy_\ast) = 0$},
then \mbox{$y_\dag = y_\ast^{-1}$}  and
\begin{align*}
&\xi(v_\ast{\cdot}y_\ast) = v_\ast{\cdot}y_\ast,
&&\xi(x_\ast{\cdot}v_\ast{\cdot}y_\ast) = x_\ast{\cdot}v_\ast{\cdot}y_\ast,
\\
&\xi(v_\ast) = v_\ast,
&&\xi(x_\ast{\cdot}v_\ast) = x_\ast{\cdot}v_\ast{\cdot}y_\dag^{-1} =x_\ast{\cdot}v_\ast{\cdot}y_\ast.
\end{align*}
\vspace{-5mm}

\noindent
Here, \mbox{$\xi:V \to  V'$}  is not injective,  and we define
\mbox{$ \xi(v_\ast \xrightarrow{x_\ast{\cdot}} {\mkern-9mu-}x_\ast{\cdot}v_\ast)$} to be\vspace{-2mm}$$ \xi(v_\ast) \xrightarrow{{\cdot}y_\ast' }{\mkern-9mu-} \xi( v_\ast{\cdot}y_\ast)
\xrightarrow {x_\ast{\cdot}}{\mkern-9mu-}\xi(x_\ast {\cdot}v_\ast )\vspace{-2mm}$$
in \mbox{$\operatorname{Paths}(X \leftwreath V' \rightwreath Y')$};
notice that  \mbox{$  \xi(v_\ast) \xrightarrow{{\cdot}y_\ast' }{\mkern-9mu-} \xi( v_\ast{\cdot}y_\ast)$}
equals
\mbox{$    \xi\bigl(   v_\ast
\xrightarrow {{\cdot}y_\ast} {\mkern-9mu-}   v_\ast{\cdot}y_\ast   \bigr)$}.

Since \mbox{$\xi:V \to  V'$} is surjective, but not injective, \mbox{$\vert V' \vert < \vert V \vert$}.
Notice also that \mbox{$y_\ast \in \langle V' \rangle$}; hence, \mbox{$V'$} generates $F$.

Let us examine the graphs \mbox{$ X \leftwreath V' \rightwreath Y' $},
\mbox{$ X \leftwreath V'  $}, and \mbox{$V' \rightwreath Y'$}.
From the form that $\xi$ takes here, we see that
 \mbox{$X \leftwreath V' \rightwreath Y' $} is obtained from
\mbox{$X \leftwreath V \rightwreath Y$} by  first
removing one edge, secondly reattaching it elsewhere,
and thirdly collapsing various edges.
The graph \mbox{$X \leftwreath V'$} is thus obtained from the tree
 \mbox{$X  \leftwreath V $} by first removing an edge,
leaving  components with vertex-sets $V_0$ and $V_1$,
secondly reattaching the edge elsewhere,
and thirdly    identifying  one or
 more vertices of $V_0$ with vertices of  $V_1$.
Hence, \mbox{$X \leftwreath V'$} is  connected, and therefore a tree.
The graph \mbox{$ V' \rightwreath Y'$}
is obtained from the tree \mbox{$ V \rightwreath Y $} by collapsing edges; hence,
\mbox{$ V' \rightwreath Y'$} is   a tree.  Thus, $V'$ is an $(X,Y')$-translator, and
  \mbox{$\operatorname{d}(X,Y) > \operatorname{d}(X,Y')$}.
\end{proof}

\bigskip

\centerline{\sc References}

\leftskip 18pt \parindent -18pt 

\bigskip

Warren\,\,Dicks:\,\,Groups, trees and projective modules.
Lecture Notes Math.\,\,\textbf{790}. Springer, Berlin (1980).

Warren\,\,Dicks:\,\,On free-group algorithms that sandwich a subgroup between free-product factors.
 J.\,\,Group Theory \textbf{17}, 13--28   (2014).

Warren\,\, Dicks:\,\,On Whitehead's first free-group algorithm, cutvertices, and free-product factorizations.
\url{https://arxiv.org/abs/1704.05338},  9 pages   (2017).

  M.\,\,Dehn:\,\,\"Uber die Topologie des dreidimensionalen Raumes. Math.\,\,Ann.\,\,\textbf{69}, 137--168
 (1910).

 Ralph H.\,\,Fox:\,\,Free differential calculus I.\,\,\,Derivation in the free group ring.
  Ann.\,\,of Math. \textbf{57}\,\,(2), 547--560  (1953).




S.\,\,M.\,\,Gersten:\,\,Fixed points of automorphisms of free groups.
Adv.\,\,in\,\,Math.\,\,\textbf{64}, 51--85    (1987).


Michael\,\,Heusener and Richard\,\,Weidmann:\,\,A remark on
Whitehead's lemma.  Preprint, 4 pages (2014).

P.\,\,J.\,\,Higgins and R.\,\,C.\,\,Lyndon:\,\,Equivalence of elements under automorphisms of a
free   group.   Queen Mary College Mimeographed Notes, London, 5 pages (1962).

P.\,\,J.\,\,Higgins and R.\,\,C.\,\,Lyndon:\,\,Equivalence of elements under automorphisms of a
free   group.
J.\,\,London Math.\,\,Soc.\,\,\textbf{8}, 254--258  (1974).


A.\,\,H.\,\,M.\,\,Hoare:\,\,On automorphisms of free groups\,\,I.\,\, J.\,\,London Math.\,\,Soc.\,(2)\,\,\textbf{38},
277--285  (1988).

A.\,\,H.\,\,M.\,\,Hoare:\,\,On automorphisms of free groups\,\,II.\,\,
J.\,\,London Math.\,Soc.\,(2)\,\,\textbf{42},  226--236 (1990).

Sava\,\,Krsti\'c:\,\,On graphs representing automorphisms of free groups.
Proc.\,\,Amer.\,\,Math.\,\,Soc. \textbf{107},   573--575  (1989).







J.\,\,Nielsen:\,\,\"Uber die Isomorphismen unendlicher Gruppen
ohne Relation.\,\,\,Math.\,\,Ann.\,\,\textbf{79}, 269--272 (1919).



Elvira\,\,Strasser\,\,Rapaport:\,\,On free groups and their automorphisms.\,\,Acta Math.\,\,\textbf{99},
139--163 (1958).






John\,\,R.\,\,Stallings:\,\,Automorphisms of free groups in graph theory and topology.  Talk given at the
  AMS meeting, SUNY, Albany, August 8--11  (1983).


  Richard\,\,Stong:\,\,Diskbusting elements of the free group. Math.\,\,Res.\,\,Lett.\,\,\textbf{4}, 201--210 (1997).

 J.\,\,H.\,\,C.\,\,Whitehead:\,\,On certain sets of elements in a free group.
 Proc.\,\,London  Math.\,\,Soc.  (2) \textbf{41}, 48--56  (1936a).

 J.\,\,H.\,\,C.\,\,Whitehead:\,\,On equivalent sets of elements in a free group. Ann.\,\,of Math.\,\,(2)
\textbf{37},  782--800  (1936b).

\leftskip 0pt \parindent 0pt

\end{document}